\newtheorem{lemma}{Lemma}[section]
\newtheorem{proposition}{Proposition}[section]
\newtheorem{theorem}{Theorem}[section]
\newtheorem{corollary}{Corollary}[section]
\newtheorem{definition}{Definition}[section]
\newcommand{\nn}{\nonumber}
\newcommand{\C}{\mathbb{C}}
\newcommand{\qp}{\mathbb{C}P}
\newcommand{\ud}{\,\mathrm{d}}
\newcommand{\delb}{\overline{\partial}}
\newcommand{\del}{\partial}
\begin{document}
\title{Noncommutative complex geometry of  \\ the quantum projective space}
\author {Masoud Khalkhali and Ali Moatadelro\\
 Department of Mathematics, University of Western Ontario\\
 London, Ontario, Canada}
\date{}
\maketitle{}
\begin{abstract}
We define holomorphic structures on canonical line bundles of the quantum projective space $\qp^{\ell}_q$ and identify their space of holomorphic sections. This determines the quantum homogeneous coordinate ring of the quantum projective space. We show that the fundamental class of $\qp^{\ell}_q$ is naturally presented by a twisted positive Hochschild cocycle. Finally, we verify the main statements of Riemann-Roch formula and Serre duality for $\qp^{1}_q$ and $\qp^{2}_q$.
\end{abstract}
\begin{center}\tableofcontents
\end{center}
\section{Introduction}
In this paper we continue a study of complex structures on quantum projective
spaces that was initiated in \cite{KLS} for
$\qp^1_q$ and was further continued in \cite{KM} for the case $\qp^2_q$.
In the present paper we consider a natural holomorphic structure on the quantum
projective space  $\qp^{\ell}_q$ already presented in \cite{DD, DL},
and define  holomorphic structures on canonical quantum line bundles on it.
The space of holomorphic sections of these line bundles then will determine
the quantum homogeneous coordinate ring of $\qp^{\ell}_q$.

In Section \ref{prelim}, we review the  preliminaries on irreducible representations
of quantum groups $U_q(\mathfrak{su}(\ell +1))$  and the
Gelfand-Tsetlin   basis for these representations.
In Section \ref{complex} we recall the definition of
a complex structure, holomorphic line bundles and bimodule connections.
In Section \ref{cpl} we recall the definition of the quantum projective space
$\qp^{\ell}_q$, and endow its canonical line bundles with holomorphic connections. We also identify
the space of holomorphic sections of these line bundles. In Section
\ref{bimod}, we define bimodule connections on canonical line bundles.
This enables us to define the quantum homogeneous coordinate ring of
$\qp^{\ell}_q$ and identify this ring with the ring of twisted
polynomials. In Section \ref{positive} we introduce a twisted positive
Hochschild cocycle $2 \ell$-cocycle on $\qp^{\ell}_q$,  by using the complex structure of
 $\qp^{\ell}_q$, and show that it is  cohomologous to its fundamental class which is represented
by a  twisted cyclic cocycle. This certainly provides further evidence
for the belief, advocated by Alain Connes
 \cite{C1,C2}, that holomorphic structures in noncommutative geometry  should be  represented by (extremal)
 positive Hochschild cocycles within the fundamental class.
Finally in the last Section we verify directly that the main statements of Riemann-Roch formula and
Serre duality theorem hold for  that the $\qp^1_q$ and $\qp^2_q$.  

\section{Preliminaries on $U_q(\mathfrak{su}(\ell +1))$ and $\mathcal A(SU_q(\ell+1))$}
\label{prelim}
\subsection{The quantum enveloping algebra $U_q(\mathfrak{su}(\ell+1))$}
Let $0<q<1$. We use the following notation
\begin{align*}
&[a,b]_q=ab-q^{-1}ba,\quad[z]=\frac{q^z-q^{-z}}{q-q^{-1}},\quad
[n]!=[n][n-1]\cdots[1],\\ &\begin{bmatrix}n\\m\end{bmatrix}=\frac{[n]!}{[m]![n-m]!},\quad
[j_1,j_2,\cdots,j_k]!=q^{-\sum_{r<s}j_rj_s}\frac{[j_1+j_2+\cdots+j_k]!}{[j_1]![j_2]!\cdots[j_k]!}\,.
\end{align*}
The quantum enveloping algebra $U_q(\mathfrak{su}(\ell+1))$, as a $*$-algebra, is generated by elements $K_i,K_i^{-1},E_i,F_i$, $i=1,2,\cdots,\ell$, with $K_i^*=K_i$ and $E_i^*=F_i$, subject to the following relations for $0\leq i,j\leq \ell$ \cite{KS},
\begin{align}\label{EFK}
&K_iK_j=K_jK_i\qquad E_iK_i=q^{-1}K_iE_i\nn\\
&E_iK_j=q^{1/2}K_jE_i \qquad\text{if}\quad |i-j|=1\nn\\
&E_iK_j=K_jE_i \qquad\text{if}\quad |i-j|>1\\
&E_iF_j-F_jE_i=\delta_{ij}\frac{K_i^2-K_i^{-2}}{q-q^{-1}}\nn\\
&E_iE_j=E_jE_i \qquad\text{if}\quad |i-j|>1,\nn
\end{align}
and the Serre relation
\[
E_i^2E_j-(q+q^{-1})E_iE_jE_i+E_jE_i^2=0\qquad\text{if}\quad |i-j|=1.
\]
The coproduct, counit and antipode of this Hopf algebra is given by
\begin{align*}
&\Delta(K_i)=K_i\otimes K_i,\quad \Delta(E_i)=E_i\otimes K_i+K_i^{-1}\otimes E_i,\\
&\epsilon(K_i)=1,\quad\epsilon(E_i)=0,\quad S(K_i)=K_i^{-1},\quad S(E_i)=-qE_i.
\end{align*}
\subsection{The quantum group $\mathcal A(SU_q(\ell+1))$}
As a $*$-algebra, $\mathcal A(SU_q(\ell+1))$ is generated by $(\ell+1)^2$ elements $u^i_j$, where $i,j=1,2,...,\ell+1$ subject to the following commutation relations
\begin{align*}
&u^i_ku^j_k=qu^j_ku^i_k, &u^k_iu^k_j=qu^k_ju^k_i \quad \forall\, i<j,&\\
&[u^i_l,u^j_k]=0, &[u^i_k,u^j_l]=(q-q^{-1})u^i_lu^j_k &\quad \forall\, i<j,\,k<l,
\end{align*}
and
\begin{equation*}
\sum_{\pi\in S_{\ell+1}}(-q)^{||\pi||}u^1_{\pi(1)}u^2_{\pi(2)}\cdots u^{\ell+1}_{\pi(\ell+1)}=1,
\end{equation*}
where the sum is taken over all permutations of the $\ell+1$ elements and $||\pi||$ is the number of simple inversions of the permutation $\pi$. The involution is given by
\begin{align*}
(u^i_j)^*=(-q)^{j-i}\sum_{\pi\in S_{\ell}}(-q)^{||\pi||}u^{k_1}_{\pi(n_1)}u^{k_2}_{\pi(n_2)}\cdots u^{k_{\ell}}_{\pi(n_{\ell})}
\end{align*}
with $\{k_1,\cdots,k_{\ell}\}=\{1,2,\cdots,\ell+1\}\setminus \{i\}$ and $\{n_1,\cdots,n_{\ell}\}=\{1,2,\cdots,\ell+1\}\setminus \{j\}$ as ordered sets, and the sum is over all permutations $\pi$ of the set $\{n_1,\cdots,n_{\ell}\}$. The Hopf algebra structure is given by
\begin{align*}
\Delta(u^i_j)=\sum_k u^i_k\otimes u^k_j,\quad \epsilon(u^i_j)=\delta^i_j,\quad
S(u^i_j)=(u^j_i)^*.
\end{align*}
\subsection{Irreducible representations of $U_q(\mathfrak{su}(\ell+1))$ and the related Gelfand-Tsetlin tableaux}
The finite dimensional irreducible $*$-representations of $U_q(\mathfrak{su}(\ell+1))$ are indexed by $\ell-$tuples of non-negative integers $n:=(n_1,n_2,...,n_{\ell})$. We denote this representation by $V_n$. A basis for $V_n$ is given
by Gelfand-Tsetlin (GT) tableaux  that we denote it here by
\begin{align*}
|\underline m\rangle:=\begin{bmatrix}
m_{1,\ell+1} & m_{2,\ell+1} & \ldots & m_{\ell,\ell+1} &  m_{\ell+1,\ell+1}\\
m_{1,\ell} & m_{2,\ell} & \ldots & m_{\ell,\ell} \\
\vdots & \vdots\\
m_{1,2} & m_{2,2}\\
m_{1,1}
\end{bmatrix}
\end{align*}
where $n_i=m_{i,\ell+1}-m_{i+1,\ell+1}$ for $i=1,2,...,\ell$, which fixes $m_{ij}$ up to an additive constant. The action of generators on this basis is given by \cite{KS},
$K_k|\underline m\rangle=q^{\frac{a_k}{2}}|\underline m\rangle$, where
\begin{align}\label{ak}
a_k&=\sum_{i=1}^km_{i,k}-\sum_{i=1}^{k-1}m_{i,k-1}-\sum_{i=1}^{k+1}m_{i,k+1}+\sum_{i=1}^{k}m_{i,k}\\
&=2\sum_{i=1}^km_{i,k}-\sum_{i=1}^{k-1}m_{i,k-1}-\sum_{i=1}^{k+1}m_{i,k+1}\nn,
\end{align}
and the action of $E_k$ is given by
\begin{equation}\label{E}
E_k|\underline m\rangle=\sum_{j=1}^kA^j_k|\underline m^j_k\rangle,
\end{equation}
where $|\underline m^j_k\rangle$ is obtained from $|\underline m\rangle$ when $m_{j,k}$ is replaced by $m_{j,k}+1$ and
\begin{align}\label{Ajk}
A^j_k=\Big(-\frac
{\Pi_{i=1}^{k+1}[l_{i,k+1}-l_{j,k}]\Pi_{i=1}^{k-1}[l_{i,k-1}-l_{j,k}-1]}{
\Pi_{i\neq j}[l_{i,k}-l_{j,k}][l_{i,k}-l_{j,k}-1]}\Big)^{1/2}.
\end{align}
Here $l_{i,j}=m_{i,j}-i$, and the positive square root is taken.
For the inner product $\langle\underline i|\underline j\rangle:=\delta_{\underline i,\underline j}$ this will be a $*$-representation and the matrix coefficients of $\rho^n:U_q(\mathfrak{su}(\ell+1))\rightarrow End(V_n)$ will be $\rho^n_{\underline i,\underline j}(h)=\langle\underline i|h|\underline j\rangle$. Note that the basic representation of $U_q(\mathfrak{su}(\ell+1))$ is given by $\sigma:U_q(\mathfrak{su}(\ell+1))\rightarrow M_{\ell+1}(\mathbb{C})$ where
\begin{equation*}
\sigma^i_j(K_r)=\delta^i_jq^{\frac{1}{2}(\delta_{r+1,i}-\delta_{r,i})}, \quad \sigma^i_j(E_r)=\delta^i_{r+1}\delta^r_j,
\end{equation*}
and the Hopf pairing $\langle\,,\rangle:U_q(\mathfrak{su}(\ell+1))\times\mathcal A(SU_q(\ell+1))\rightarrow \mathbb{C}$ is defined by
$\langle h, u^i_j\rangle:=\sigma^i_j(h)$. Therefore
\begin{align}\label{pairing}
&\langle K_r,u^i_j\rangle=\sigma^i_j(K_r)=\delta^i_jq^{\frac{1}{2}(\delta_{r+1,i}-\delta_{r,i})},\nonumber\\
&\langle E_r,u^i_j\rangle=\sigma^i_j(E_r)=\delta^i_{r+1}\delta^r_j.
\end{align}
Using Peter-Weyl theorem, a basis $\{t^n_{\underline i,\underline j}\}$ for $\mathcal A(SU_q(\ell+1))$ is implicitly given by $\langle h,t^n_{\underline i,\underline j}\rangle=\rho^n_{\underline i,\underline j}(h)$. For later use it is worth mentioning here that for $n=(0,0,...,0,1)$ these basis $ t^n_{\underline i,\underline j}$ are just generators $u^i_j$. In order to show this, it is enough to compute $\rho^n_{\underline i,\underline j}(h)$ for generators of $U_q(\mathfrak{su}(\ell+1))$.
Indeed for $n=(0,0,...,0,1)$ a basis element $|\underline m\rangle$ takes the following form
\begin{align*}
|\underline m\rangle:=\begin{bmatrix}
m & m & \ldots & m & m &  m-1\\
m & m & \ldots & m & m_l \\
\vdots & \vdots\\
m & m_{2}\\
m_{1}
\end{bmatrix}
\end{align*}
where each of the $m_i$'s is either $m$ or $m-1$ such that $m_1\geq m_{2}\geq...\geq m_{l}$. So $|\underline m\rangle$ can be parametrized just by one integer $i$. Let us denote $|\underline m\rangle$ by $|i\rangle$ when $m_j=m$ for $j\leq i-1$ and $m_j=m-1$ for $j\geq i$.
\begin{align*}
\rho^n_{\underline i,\underline j}(K_r)=\langle\underline i|K_r|\underline j\rangle=q^{\frac{a_r}{2}}\langle\underline i|\underline j\rangle=q^{\frac{a_r}{2}}\delta_{\underline i,\underline j}.
\end{align*}
where
\[
a_r
=2\sum_{i=1}^rm_{i,r}-\sum_{i=1}^{r-1}m_{i,r-1}-\sum_{i=1}^{r+1}m_{i,r+1}.
\]
So for our case we will end up with
\begin{align*}
\rho^n_{i, j}(K_r)=\langle i|K_r| j\rangle=q^{\alpha/2}\langle i|j\rangle=q^{\alpha/2}\delta_{i,j},
\end{align*}
where
\begin{align*}
\alpha=\begin{cases}0\quad \text{if}\quad r\geq j \quad\text{or} \quad r\leq j-2\\
                   1 \quad \text{if} \quad r=j-1\\
                   -1 \quad \text{if} \quad r=j.
\end{cases}
\end{align*}
One can easily see that $\alpha=\delta_{r+1,j}-\delta_{r,j}$ and we get the same answer as (\ref{pairing}). Also we have
\begin{align*}
\rho^n_{i, j}(E_r)=\langle i|E_r| j\rangle=\delta^r_j\langle i|r+1\rangle=\delta_{i,r+1}\delta^r_j=\langle E_r,u^i_j\rangle,
\end{align*}
which can be obtained from (\ref{E}) and (\ref {Ajk}) since
\begin{align*}
E_r|r\rangle= A^r_r|r+1\rangle
\end{align*}
and
\begin{align*}
A^r_r=\Big(-\frac
{\Pi_{i=1}^{r+1}[l_{i,r+1}-l_{r,r}]\Pi_{i=1}^{r-1}[l_{i,r-1}-l_{r,r}-1]}{
\Pi_{i\neq j}[l_{i,r}-l_{r,r}][l_{i,r}-l_{r,r}-1]}\Big)^{1/2}.
\end{align*}
The fact that only $A^r_r$ contributes in the summation (\ref{E}) is simply because of the form of $|r\rangle$. We also have $E_r|j\rangle=0$ if $j\neq r$.
The value of this fraction is one since both the numerator and the denominator are equal to $[r]![r-1]!$. In particular we have $t^n_{\ell+1,j}=u^{\ell+1}_j=z_j$, the generators of the quantum sphere $\mathcal A(S^{2\ell+1}_q)$ to be defined in the next section.
\section{The complex structure of $\qp^{\ell}_q$}
\label{complex}
In this section we first review the general setup of a noncommutative complex structure on
a given $\ast$-algebra as introduced in \cite{KLS}. Then we shall define
a complex structure on $\qp^{\ell}_q$ and its canonical line bundles
following closely \cite{DD}.
\subsection{Noncommutative complex structures}
Let $\mathcal A$ be a $\ast$-algebra over $\mathbb C$. A \textit{differential $\ast$-calculus} for
 $\mathcal A$ is a pair $(\Omega^{\bullet}(\mathcal A),\ud)$, where
$\Omega^{\bullet}(\mathcal A)=\bigoplus_{n \geq 0}\Omega^n(\mathcal A)$
 is a graded differential $\ast$-algebra with $\Omega^0(\mathcal A)=\mathcal A$. The differential map
 $\ud:\Omega^{\bullet}(\mathcal A)\rightarrow \Omega^{\bullet +1}(\mathcal A)$ satisfies the graded Leibniz rule,
 $\ud(\omega_1\omega_2)=(\ud\omega_1)\omega_2+(-1)^{deg (\omega_1)}\omega_1(\ud\omega_2)$ and
 $\ud^2=0$. The differential also  commutes with the $\ast$-structure:
 $\ud(a^*)=(\ud a)^*$.
\begin{definition}
A complex structure on an algebra $\mathcal A$, equipped with a differential calculus
 $(\Omega^{\bullet}(\mathcal A),\ud)$,
is a bigraded differential $\ast$-algebra $\Omega^{(\bullet,\bullet)}(\mathcal A)$
and two differential
maps $\partial:\Omega^{(p,q)}(\mathcal A)\rightarrow \Omega^{(p+1,q)}(\mathcal A)$ and
$\delb:\Omega^{(p,q)}(\mathcal A)\rightarrow \Omega^{(p,q+1)}(\mathcal A)$ such that:
\begin{eqnarray}
\Omega^n(\mathcal A)=\bigoplus_{p+q=n}\Omega^{(p,q)}(\mathcal A)\,,\quad \partial a^*=(\delb a)^*\,,
 \quad \ud=\partial+\delb.
\end{eqnarray}
Also, the involution $\ast$ maps $\Omega^{(p,q)}(\mathcal A)$ to $\Omega^{(q,p)}(\mathcal A)$.
\end{definition}
We will use the simple notation $(\mathcal A,\delb)$ for a complex structure on $\mathcal A$.
\begin{definition}
Let $(\mathcal A,\delb)$ be an algebra with a complex structure. The space of holomorphic
elements of $\mathcal A$ is defined as
\begin{equation}
\mathcal O(\mathcal A):=Ker\{\delb:\mathcal A\rightarrow\Omega^{(0,1)}(\mathcal A)\}.\nonumber
\end{equation}
\end{definition}
\subsection{Holomorphic connections}
Suppose we are given a differential calculus $(\Omega^{\bullet}(\mathcal A),\ud)$. We recall that a
connection on a left $\mathcal A$-module $\mathcal E$ for the differential
calculus $(\Omega^{\bullet}(\mathcal A),\ud)$ is a linear map
$\nabla:\mathcal E\rightarrow\Omega^1(\mathcal A)\otimes_{\mathcal A} \mathcal E$
with left Leibniz property:
\begin{equation}\label{left Leibniz}
\nabla(a\xi)=a\nabla \xi+\ud a \otimes_{\mathcal A} \xi,\quad \forall a\in \mathcal A,\,
\forall \xi \in \mathcal E.
\end{equation}
By the graded Leibniz rule, i.e.
\begin{equation}
\nabla(\omega\xi)=(-1)^n\omega \nabla \xi+\ud \omega \otimes_{\mathcal A} \xi,
 \quad \forall\omega \in \Omega^n(\mathcal A),\, \forall \xi \in \Omega(\mathcal A)\otimes_{\mathcal A} \mathcal E,
\end{equation}
this connection can be uniquely extended to a map, which will be denoted again by $\nabla$, $\nabla:
\Omega^{\bullet}(\mathcal A) \otimes_{\mathcal A} \mathcal E
\rightarrow \Omega^{\bullet+1}(\mathcal A) \otimes_{\mathcal A} \mathcal E$.

The curvature of such a connection is defined by $F_\nabla=\nabla \circ \nabla$.
One can show that, $F_\nabla$ is an element of
Hom$_\mathcal A (\mathcal E,\Omega^2(\mathcal A)\otimes_{\mathcal A} \mathcal E)$.
\begin{definition}\label{holomorphic vect bundle}
Suppose $(\mathcal A,\delb)$ is an algebra with a complex structure.
 A holomorphic structure on a left $\mathcal A$-module
$\mathcal E$ with respect to this complex structure is given by a linear map
$\nabla^{\delb}:\mathcal E \rightarrow \Omega^{(0,1)} \otimes_{\mathcal A} \mathcal E$ such that
\begin{eqnarray}
&&\nabla^{\delb}(a\xi)=a\nabla^{\delb}\xi+\delb a \otimes_{\mathcal A} \xi, \quad \forall
a \in \mathcal A,\, \forall \xi \in \mathcal E,
\end{eqnarray}
and such that $F_{\nabla^{\delb}}=(\nabla^{\delb})^2=0$.
\end{definition}
Such a connection will be called a flat $\delb$-connection. In the case which $\mathcal E$ is a finitely generated $\mathcal A$-module, $(\mathcal E,\nabla^{\delb})$ will be called a holomorphic vector bundle.

Associated to a flat $\delb$-connection, there exists a complex of vector spaces
\begin{equation}
0\rightarrow \mathcal E \rightarrow \Omega^{(0,1)} \otimes_{\mathcal A} \mathcal E
 \rightarrow \Omega^{(0,2)} \otimes_{\mathcal A} \mathcal E\rightarrow...
\end{equation}
Here $\nabla^{\delb}$ is extended to $\Omega^{(0,q)} \otimes_{\mathcal A} \mathcal E$
by the graded Leibniz rule. The zeroth cohomology group of this complex is called the space of holomorphic sections of
$\mathcal E$ and will be denoted by $H^0(\mathcal E,\nabla^{\delb})$.
\subsection{Holomorphic structures on bimodules}
\begin{definition}
Let $\mathcal A$ be an algebra with a differential calculus $(\Omega^{\bullet}(\mathcal A),\ud)$.
A bimodule connection on an $\mathcal A$-bimodule $\mathcal E$ is given by a connection $\nabla$
 which satisfies a left Leibniz rule as in formula ({\ref{left Leibniz}})
and a right $\sigma$-twisted Leibniz property with respect to a bimodule isomorphism
$\sigma:\mathcal E\otimes_{\mathcal A} \Omega^1(\mathcal A) \rightarrow
\Omega^1(\mathcal A)\otimes_{\mathcal A} \mathcal E$. i.e.
 \begin{equation}
\nabla(\xi a)=(\nabla \xi) a+\sigma(\xi\otimes \ud a)\,, \quad\forall \xi \in\mathcal E,\,
\forall a \in \mathcal A.
\end{equation}
\end{definition}
The tensor product connection of two bimodule connections $\nabla_1$ and $\nabla_2$
on two $\mathcal A$-bimodules $\mathcal E_1$ and $\mathcal E_2$ with respect to the bimodule isomorphisms
$\sigma_1$ and $\sigma_2$ is a map $\nabla:\mathcal E_1\otimes_{\mathcal A} \mathcal E_2\rightarrow
\Omega^1(\mathcal A)\otimes_{\mathcal A}\mathcal E_1\otimes_{\mathcal A} \mathcal E_2$ defined by
\begin{equation}
\nabla:=\nabla_1 \otimes 1 +(\sigma_1\otimes 1)(1\otimes \nabla_2).\nonumber
\end{equation}
It can be checked that, $\nabla$ has the right $\sigma$-twisted property with
$\sigma:\mathcal E_1\otimes \mathcal E_2\otimes \Omega^1(\mathcal A)\rightarrow
\Omega^1(\mathcal A)\otimes\mathcal E_1\otimes \mathcal E_2$ given by
$\sigma=(\sigma_1\otimes 1)\circ(1\otimes\sigma_2)$.

\section{$\qp^{\ell}_q$ and the associated quantum line bundles}
\label{cpl}
We recall  the definition of the quantum projective space $\qp^{\ell}_q$ as
the quantum homogeneous space of the quantum group $SU_q(\ell+1)$ and its quantum
subgroup $U_q(\ell)$ from \cite{DD}. Let $\hat K:=(K_1K_2^2\cdots K_{\ell}^{\ell})^{2/{\ell+1}}$ and $\mathcal L_h a:=a\triangleleft S^{-1}(h)$. Then we define the quantum $2\ell+1$ sphere as
\[
\mathcal A(S^{2\ell+1}_q):=\{a\in\mathcal A(SU_q(\ell+1)) | \, \mathcal L_h(a)=\epsilon(h)a, \quad\forall h\in U_q(\mathfrak{su}(\ell))\}.
\]
The invariant elements of this space under the action of $\hat K$ will provide the coordinate functions of the quantum projective space
\[\label{qp}
\mathcal A(\qp^{\ell}_q):=\{a\in\mathcal A(S^{2\ell+1}_q) |\mathcal L_{\hat{K}}a=a\}.
\]
The space of sections of the canonical line bundles $L_N$, $N\in \mathbb Z$, are defined by
\begin{equation}\label{LN}
L_N:=\{a\in\mathcal A(S^{2\ell+1}_q) |\mathcal L_{\hat{K}}a=q^{\frac{N\ell}{\ell+1}}a\}.
\end{equation}
Let
\begin{align*}
M_{jk}:=[E_j,[E_{j+1},...,[E_{k-1},E_k]_q...]_q]_q\quad \text{for} \,1\leq j<k \leq \ell,
\end{align*}
and
\begin{align*}
N_{jk}:=(K_jK_{j+1}...K_{\ell}).(K_{k+1}K_{k+2}...K_{\ell}).\hat K^{-1}\quad \text{for} \,1\leq j<k \leq \ell.
\end{align*}
Let $X_i:=N_{i\ell}M^*_{i\ell}$ for $i=1,...,\ell$.
We will also use a right black action instead of left action by $h\blacktriangleright a:=a\triangleleft \theta(h)$, where $\theta:U_q(\mathfrak{su}(\ell+1))\rightarrow U_q(\mathfrak{su}(\ell+1))^{op}$ is the Hopf $*$-algebra isomorphism which is defined on generators as
\begin{equation}
\theta(K_i)=K_i,\quad\theta(E_i)=F_i,\quad\theta(F_i)=E_i,\nonumber
\end{equation}
and satisfying $\theta^2=id$.

For any  $r$-dimensional $*$-representation of $U_q(\mathfrak{u}(\ell))$ like $\sigma$, we define the $\mathcal A(\qp^{\ell}_q)$-bimodule $\mathfrak M(\sigma):=\{v\in \mathcal A(SU_q(\ell+1))^r\,|\,v\triangleleft h=\sigma(h)v,\, \forall h \in U_q(\mathfrak{u}(\ell))\}$ \cite {DD,DDL}. Suppose that $\sigma ^N_1$ is obtained from the basic representation $\sigma_1:U_q(\mathfrak{su}(\ell))\rightarrow\text{End}(\C^\ell)$ lifted to a representation of $U_q(\mathfrak{u}(\ell))$ by $\sigma^N_1(\hat K)=q^{1-\frac{\ell N}{\ell+1}}Id_{\C^\ell}$. Then the space of anti-holomorphic 1-forms is  given by $\Omega^{(0,1)}:=\mathfrak M(\sigma^0_1)$. Hence, any anti-holomorphic 1-form is a $\ell$-tuple $v:=(v_1,...,v_{\ell})$ such that $v\triangleleft h=\sigma^0_1(h)v$.
The complex structure of $\qp^\ell_q$ is given by
\begin{equation*}
\delb:=\sum \mathcal L_{\hat KX_i}\otimes \mathfrak e^L_{e^i}.
\end{equation*}
Here $e_i$'s are elements of the standard basis and $\mathfrak e^L_{e^i}$ is the left exterior product by $e_i$.
We show that on $\mathcal A(\qp^\ell_q)$ we have
\begin{align}\label{delbar}
\delb a=-\Big(a\triangleleft F_\ell F_{\ell-1}...F_1,a\triangleleft F_\ell F_{\ell-1}...F_2,...,a\triangleleft F_\ell F_{\ell-1},a\triangleleft F_\ell\Big).
\end{align}
In fact,
\begin{align*}
\mathcal L_{X_i} a&= a \triangleleft S^{-1}\Big(\hat KK_\ell K_{\ell-1}\cdots K_i\hat K^{-1}[...[[F_{\ell},F_{\ell-1}]_q,F_{\ell-2}]_q,...,F_i]_q\Big)\\&=(-q^{-1})^{\ell-i}(-q)^{\ell-i+1} a\triangleleft F_\ell F_{\ell-1}\cdots F_i\hat K K_i^{-1}K_{i+1}^{-1}...K_\ell^{-1}\hat K^{-1}\\&=(-1)^{2(\ell-i)+1}a\triangleleft \hat K K_i^{-1}K_{i+1}^{-1}...K_\ell^{-1}\hat K^{-1}F_\ell F_{\ell-1}...F_i\\&=-a\triangleleft F_\ell F_{\ell-1}...F_i
\end{align*}

Here we used the commutation relations (\ref{EFK}). The only order of $F_j$'s in the commutators that takes part in computation is $ F_\ell F_{\ell-1}...F_i$ and others vanish because $a\triangleleft F_j=0$ for $j<\ell,\,a\in\mathcal A(\qp^\ell_q)$. Note that, all elements of $\mathcal A(\qp^\ell_q)$ are fixed under the of action of all $K_i$'s.

We would like to find a basis for the space of sections of the canonical quantum line bundles $L_N$. Note that $L_0=\mathcal A(\qp^{\ell}_q)$.
By (\ref {LN}), the conditions that must hold are as follows
\begin{align}\label{LN2}
&K_i\blacktriangleright a=a, \quad E_i\blacktriangleright a=F_i\blacktriangleright a=0, \quad i=1,2,...,\ell-1, \nonumber\\
&K_1K_2^2...K_\ell^\ell\blacktriangleright a=q^{N\ell/2}a.
\end{align}
\begin{proposition}
 For any non-negative integer N, equations (\ref {LN2}) force that $|\underline m\rangle$,
  as a second component of $a=|\underline m'\rangle\otimes |\underline m\rangle$, to be of the
  form
\begin{align*}
\begin{bmatrix}
m_{1,\ell+1} & m & \ldots & m &  2m-m_{1,\ell+1}-N\\
m & m & \ldots & m \\
\vdots & \vdots\\
m & m\\
m
\end{bmatrix}.
\end{align*}
\end{proposition}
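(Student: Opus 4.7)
The plan is to use Peter--Weyl to reduce to a condition on a single GT tableau, translate the three conditions (\ref{LN2}) on the right black action into representation-theoretic conditions on $|\underline m\rangle$, and then extract the allowed shapes from the branching structure of the GT basis.

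First, by Peter--Weyl every $a\in\mathcal A(SU_q(\ell+1))$ expands uniquely as a sum over representations $V_n$ of matrix coefficients $t^n_{\underline m',\underline m}$, which in the proposition's notation correspond to the factors $|\underline m'\rangle\otimes|\underline m\rangle$. The right black action $h\blacktriangleright a = a\triangleleft\theta(h)$ acts on such a matrix coefficient via $\rho^n$ on the column index $|\underline m\rangle$, so since $\theta$ fixes the $K_i$ and swaps $E_i\leftrightarrow F_i$, the conditions in (\ref{LN2}) become the statement that $|\underline m\rangle$ is fixed by $K_i$ and annihilated by both $E_i$ and $F_i$ for $i=1,\dots,\ell-1$, together with the weight equation $(K_1K_2^2\cdots K_\ell^\ell)|\underline m\rangle = q^{N\ell/2}|\underline m\rangle$. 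The first set of conditions precisely says that $|\underline m\rangle$ is a $U_q(\mathfrak{su}(\ell))$-invariant vector in $V_n$.

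Next, because the GT basis diagonalizes the branching chain $\mathfrak{su}(\ell+1)\supset\mathfrak{gl}(\ell)\supset\cdots\supset\mathfrak{gl}(1)$, a $U_q(\mathfrak{su}(\ell))$-invariant tableau is characterized by having its row $\ell$ of the form $(m,m,\dots,m)$ for some integer $m$, so that the subpattern from row $\ell$ downward encodes the trivial representation of $U_q(\mathfrak{su}(\ell))$. The interlacing inequalities $m_{i,k+1}\geq m_{i,k}\geq m_{i+1,k+1}$ then force $m_{i,k}=m$ for all $1\leq i\leq k\leq\ell$ inductively, and, applied to the top row, also force $m_{2,\ell+1}=\cdots=m_{\ell,\ell+1}=m$. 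This already pins down every entry of the tableau except $m_{1,\ell+1}$ and $m_{\ell+1,\ell+1}$, giving the shape displayed in the statement up to the bottom-right corner.

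Finally, $m_{\ell+1,\ell+1}$ is fixed by the weight condition. Using (\ref{ak}), a short telescoping computation on the reduced tableau gives $a_k=0$ for $k<\ell$ and $a_\ell = 2m - m_{1,\ell+1} - m_{\ell+1,\ell+1}$, so $(K_1K_2^2\cdots K_\ell^\ell)|\underline m\rangle = q^{\ell a_\ell/2}|\underline m\rangle$; equating with $q^{N\ell/2}$ yields $m_{\ell+1,\ell+1} = 2m - m_{1,\ell+1} - N$, exactly matching the proposition. The delicate step is the initial translation: correctly tracking how $\blacktriangleright$ on a matrix coefficient $t^n_{\underline m',\underline m}$ corresponds to a representation-theoretic action on $|\underline m\rangle$ with the $\theta$-swap $E_i\leftrightarrow F_i$ properly incorporated. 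Once that dictionary is in place, both the GT branching step and the weight computation are essentially mechanical.
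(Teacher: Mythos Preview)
Your proof is correct and reaches the same conclusion, but the route is genuinely different from the paper's. The paper argues inductively row by row using the explicit Gelfand--Tsetlin action formulas: from $K_1\blacktriangleright a=a$ and $E_1\blacktriangleright a=0$ it reads off $m_{11}=m_{12}=m_{22}$, then feeds in $K_2,E_2,F_2$ to get the third row, and at the inductive step computes the coefficient $A^1_k$ explicitly and shows its vanishing forces $m_{1,k+1}=m$ (and similarly $F_k$ handles $m_{k+1,k+1}$). Only at the very end does it invoke $a_\ell$ to pin down the corner entry. Your argument instead packages the conditions $K_i\blacktriangleright a=a$, $E_i\blacktriangleright a=F_i\blacktriangleright a=0$ for $i<\ell$ as the single representation-theoretic statement that $|\underline m\rangle$ spans a trivial $U_q(\mathfrak{su}(\ell))$-subrepresentation, uses the branching interpretation of the GT basis to conclude that row $\ell$ must be constant, and then lets the interlacing inequalities do all the remaining work; the paper never appeals to interlacing at all. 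Your approach is cleaner and avoids manipulating the $A^j_k$ coefficients, at the cost of invoking the (standard but not stated in the paper) fact that a GT vector lies in a one-dimensional $\mathfrak{gl}(\ell)$-isotypic component precisely when its $\ell$-th row is constant. The final weight computation for $m_{\ell+1,\ell+1}$ is identical in both arguments.
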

\begin{proof}
$K_1\blacktriangleright a=a$ and $E_1\blacktriangleright a=0$ give the equality for $m_{11}=m_{12}=m_{22}$. We know that $
K_k\blacktriangleright a=q^{\frac{a_k}{2}}a$, where $a_k$ is given by (\ref{ak}). For instance $a_1=2m_{11}-m_{12}-m_{22}$ and $a_2=2(m_{12}+m_{22})-m_{11}-(m_{13}+m_{23}+m_{33})$ and so on.
By (\ref{E}) and (\ref{Ajk}) we have
\begin{align*}
E_1|\underline m\rangle&=\Big(-[m_{11}-m_{12}][m_{11}-m_{22}+1]\Big)^{1/2}|\underline m^1_1\rangle\\
E_2|\underline m\rangle&=\Big(\frac{[m_{13}-m_{12}][m_{23}-m_{12}-1][m_{33}-m_{12}-2][m_{12}-m_{11}+1]}
{[m_{12}-m_{22}+1][m_{12}-m_{22}+2]}\Big)^{\frac{1}{2}}|\underline m^1_2\rangle\\
&+\Big(\frac{[m_{13}-m_{22}+1][m_{23}-m_{22}][m_{33}-m_{22}-1][m_{11}-m_{22}]}
{[m_{12}-m_{22}+1][m_{12}-m_{22}]}\Big)^{\frac{1}{2}}|\underline m^2_2\rangle.
\end{align*}
and
\begin{align*}
F_2|\underline m\rangle&=\Big(\frac{[m_{13}-m_{12}+1][m_{23}-m_{12}][m_{33}-m_{12}-1][m_{11}-m_{12}-1]}
{[m_{12}-m_{22}][m_{12}-m_{22}+1]}\Big)^{\frac{1}{2}}|\underline m^{-1}_2\rangle\\
&+\Big(\frac{[m_{13}-m_{22}+2][m_{23}-m_{22}+1][m_{33}-m_{22}][m_{11}-m_{22}-2]}
{[m_{12}-m_{22}+1][m_{12}-m_{22}+2]}\Big)^{\frac{1}{2}}|\underline m^{-2}_2\rangle.
\end{align*}
Now it is not difficult to see that $
K_1 |\underline m\rangle=|\underline m\rangle$ and $ E_1 |\underline m\rangle=0$, imposing
\begin{align*}
2m_{11}-m_{12}-m_{22}=0,\\
m_{11}-m_{12}=0.
\end{align*}
So $m_{11}=m_{12}=m_{22}$. In the same manner  $
K_2 |\underline m\rangle=|\underline m\rangle$, $ E_2 |\underline m\rangle=0$ and $ F_2 |\underline m\rangle=0$ give
\begin{align*}
2m_{12}+2m_{22}-m_{11}-m_{13}-m_{23}-m_{33}&=0,\\
m_{12}-m_{13}&=0,\\
m_{22}-m_{33}&=0.
\end{align*}
So we have $m_{11}=m_{12}=m_{22}=m_{13}=m_{23}=m_{33}$. Suppose that rows $1$ to $k$ with $k+1<\ell+1$, have been found equal to $m$. Let us prove that $ E_k |\underline m\rangle=0$ and $ F_k |\underline m\rangle=0$ will make the equality of all elements up to and including row $k+1$.
First note that in row $k+1$, we have $m_{2,k+1}=...=m_{k,k+1}=m$. Let us look at $A^1_k$.
\begin{align*}
A^1_k&=\Big(-\frac
{\Pi_{i=1}^{k+1}[l_{i,k+1}-l_{1,k}]\Pi_{i=1}^{k-1}[l_{i,k-1}-l_{1,k}-1]}{
\Pi_{i\neq j}[l_{i,k}-l_{1,k}][l_{i,k}-l_{1,k}-1]}\Big)^{1/2}\\
&=\Big(-\frac
{[l_{1,k+1}-l_{1,k}]...[l_{k+1,k+1}-l_{1,k}][l_{1,k-1}-l_{1,k}-1]...[l_{k-1,k-1}-l_{1,k}-1]}{
\Pi_{i\neq 1}[l_{i,k}-l_{1,k}][l_{i,k}-l_{1,k}-1]}\Big)^{1/2}.
\end{align*}
It is not hard to see that $A^1_k=0$ if $[l_{1,k+1}-l_{1,k}]=[m_{1,k+1}-m_{1,k}]=0$. So $m_{1,k+1}=m_{1,k}=m$ and by a similar observation the action of $F_k$ gives the equality $m_{k,k+1}=m_{kk}=m$.
 But to get to the very top row we need to use the action of $K_l$. We have
\begin{align*}
a_\ell&=2\sum_{i=1}^\ell m_{i,\ell}-\sum_{i=1}^{\ell-1}m_{i,\ell-1}-\sum_{i=1}^{\ell+1}m_{i,\ell+1}\\
&=2\ell m-(\ell-1)m-m_{1,\ell+1}-(\ell-1)m-m_{\ell+1,\ell+1}\\
&=2m-m_{1,\ell+1}-m_{\ell+1,\ell+1}.
\end{align*}
Since $\ell a_\ell/2=N\ell/2$, we see that $m_{l+1,l+1}=2m-m_{1,l+1}-N$.\\
\end{proof}
So we will find a basis for line bundles $ L_N$ as $\langle t^n_{\underline 0,\underline j} \rangle$,
where $n=(n_1,0,...,0,n_1+N)$ and
\begin{align*}
&|\underline 0\rangle=\begin{bmatrix}
m_{1,\ell+1} & m & \ldots & m &  2m-m_{1,\ell+1}-N\\
m & m & \ldots & m \\
\vdots & \vdots\\
m & m\\
m
\end{bmatrix}.
 \end{align*}
Note that for a negative integer $N$, the basis elements of $L_N$ are of the form of $t^n_{\underline 0,\underline j} $, where $n=(n_1-N,0,...,0,n_1)$.
\begin{theorem}\label{kerEl}
Let N be a non-negative integer. Then dim Ker $E_\ell\Big|_{L_N}=\binom{N+\ell}{\ell}$ and dim Ker $E_\ell\Big|_{L_{-N}}=0$.
\end{theorem}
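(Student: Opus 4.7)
The plan is to reduce $\operatorname{Ker}E_\ell|_{L_N}$ to the representation-theoretic condition $E_\ell|\underline{0}\rangle=0$ inside the irreducible summand $V_n$, show that this condition singles out exactly one summand when $N\ge 0$ (and no summand when $N<0$), and then count the GT tableaux of that summand.

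Working with the basis $\{t^n_{\underline 0,\underline j}\}$ constructed above, the action of $E_\ell$ on a basis element acts through the source tableau: $E_\ell\cdot t^n_{\underline 0,\underline j}=\sum_{j'=1}^{\ell} A^{j'}_\ell(\underline 0)\,t^n_{\underline 0^{j'}_\ell,\underline j}$ using (\ref{E})--(\ref{Ajk}). Since the shifted tableaux $\underline 0^{j'}_\ell$ are pairwise distinct, this sum vanishes for every $\underline j$ if and only if $E_\ell|\underline 0\rangle=0$ in $V_n$; consequently $\operatorname{Ker}E_\ell|_{L_N}=\bigoplus_{n:\,E_\ell|\underline 0\rangle=0} V_n$, each such summand contributing its full dimension.

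To evaluate $A^{j'}_\ell(\underline 0)$ I normalise so that the repeated value in $\underline 0$ outside the top row is $m=0$. Then $l_{j,\ell}=-j$ and $l_{i,\ell-1}=-i$ throughout, while $l_{1,\ell+1}=n_1-1$, $l_{i,\ell+1}=-i$ for $2\le i\le\ell$, and $l_{\ell+1,\ell+1}=-n_1-N-\ell-1$. In the numerator of (\ref{Ajk}) the factor $[l_{j',\ell+1}-l_{j',\ell}]$ equals $[0]$ whenever $j'\in\{2,\dots,\ell\}$, so $A^{j'}_\ell(\underline 0)=0$ automatically in those cases. The only non-trivial case is $j'=1$, where the analogous factor becomes $[n_1]$; every other numerator and denominator term (of the form $[1-i]$, $[-i]$, $[-n_1-N-\ell]$, etc.) is manifestly non-zero for the integer arguments that arise. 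Hence $A^1_\ell(\underline 0)=0$ precisely when $n_1=0$.

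For $N\ge 0$ this isolates the single summand $V_{(0,\dots,0,N)}$ corresponding to $n_1=0$: the interlacing constraints force $m_{i,k}=m$ for $i<k$, leaving $(m_{1,1},\dots,m_{\ell,\ell})$ as a weakly decreasing sequence in $\{m-N,\dots,m\}$, of which there are $\binom{N+\ell}{\ell}$, giving the first claim. For $L_{-N}$ with $N>0$ the basis uses $n=(n_1+N,0,\dots,0,n_1)$, so the same analysis produces the factor $[n_1+N]$ in place of $[n_1]$; since $n_1+N\ge N\ge 1$, this is non-zero for every $n_1\ge 0$, so $E_\ell|\underline 0\rangle$ never vanishes and $\operatorname{Ker}E_\ell|_{L_{-N}}=0$. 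The main technical point is verifying that, outside the single highlighted factor $[n_1]$ (respectively $[n_1+N]$), no other term of the product (\ref{Ajk}) cancels accidentally; this reduces to a finite check of $q$-integers with explicit integer arguments.
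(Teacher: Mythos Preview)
Your proof is correct and follows essentially the same route as the paper's: both reduce the kernel condition to $E_\ell|\underline 0\rangle=0$, read off from the coefficients $A^{j'}_\ell$ that this forces $n_1=0$ (via the factor $[l_{1,\ell+1}-l_{1,\ell}]=[n_1]$), and then count the GT tableaux of $V_{(0,\dots,0,N)}$ as weakly decreasing sequences in $\{m-N,\dots,m\}$. The paper merely gestures at this by saying ``a similar argument as the previous proposition'' and then jumps to the combinatorial count, whereas you spell out the vanishing of $A^{j'}_\ell$ for $j'\ge 2$ and the non-vanishing of the remaining factors in $A^1_\ell$ explicitly; but there is no genuine difference in strategy.
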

\begin{proof}
The proof for $L_{-N}$ is easy, so we just consider the case $L_{N}$. A similar argument as the previous proposition shows that the vanishing of the action of $E_\ell$ on $|\underline m\rangle$ gives $m_{1,\ell+1}=m_{1,\ell}=m$. So we get the required
tableaux form. Now we count the free entries in the first component $|\underline m'\rangle$.
\begin{align*}
\begin{bmatrix}
m & m & \ldots & m &  m-N\\
x_{1,\ell} & x_{2,\ell} & \ldots & x_{\ell,\ell} \\
\vdots & \vdots\\
x_{1,2} & x_{2,2}\\
x_{1,1}
\end{bmatrix}
=
\begin{bmatrix}
m & m & \ldots & m &  m-N\\
m & m & \ldots & x_{\ell} \\
\vdots & \vdots\\
m & x_2\\
x_1
\end{bmatrix}
\end{align*}
with $x_i=x_{i,i}$. The question turns into a simple combinatorial problem of counting the number of non-decreasing sequences $m\geq x_1\geq x_2\geq ...\geq x_{\ell} \geq m-N$, which is $\binom{N+l}{\ell}$.
\end{proof}
\begin{corollary}
There is no non-constant holomorphic polynomials in $\mathcal A(\qp^\ell_q)$.
\end{corollary}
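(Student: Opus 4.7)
The plan is to reduce the vanishing of $\delb$ on $\mathcal A(\qp^\ell_q) = L_0$ to a single kernel condition already controlled by Theorem \ref{kerEl}. By formula (\ref{delbar}), an element $a \in \mathcal A(\qp^\ell_q)$ is holomorphic precisely when
\[
a\triangleleft F_\ell F_{\ell-1}\cdots F_j = 0 \quad \text{for all } j=1,\ldots,\ell.
\]
Since $\triangleleft$ is a right action, $a\triangleleft F_\ell F_{\ell-1}\cdots F_j = ((a\triangleleft F_\ell)\triangleleft F_{\ell-1})\cdots \triangleleft F_j$. Hence the single condition $a\triangleleft F_\ell = 0$ already forces every other component of $\delb a$ to vanish, and the $\ell$ equations collapse into one.

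Next I would rewrite this condition in the black-action notation. By definition $h\blacktriangleright a := a\triangleleft \theta(h)$ with $\theta(E_\ell) = F_\ell$, so $a\triangleleft F_\ell = 0$ is equivalent to $E_\ell \blacktriangleright a = 0$. Therefore
\[
\mathcal O(\mathcal A(\qp^\ell_q)) = \mathrm{Ker}\,\delb\big|_{L_0} = \mathrm{Ker}\,E_\ell\big|_{L_0}.
\]

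Now I can directly invoke Theorem \ref{kerEl} with $N=0$, which gives
\[
\dim \mathrm{Ker}\,E_\ell\big|_{L_0} = \binom{0+\ell}{\ell} = 1.
\]
Finally, I note that $1 \in \mathcal A(\qp^\ell_q)$ is obviously holomorphic (one has $1\triangleleft F_\ell = \epsilon(F_\ell)\cdot 1 = 0$), so $\mathbb C\cdot 1$ is already a one-dimensional subspace of the holomorphic elements. Combined with the dimension count, this forces $\mathcal O(\mathcal A(\qp^\ell_q)) = \mathbb C\cdot 1$, which is the corollary.

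The only real subtlety is the first step — checking that the $\ell$-fold system of conditions $\delb a = 0$ really collapses to $a\triangleleft F_\ell = 0$. This uses nothing beyond associativity of the right action, but one must be sure that the order of the $F_i$'s in (\ref{delbar}) is the one for which the outermost right-most factor is $F_j$ (so that $F_\ell$ acts first). After that, the rest is a direct application of the dimension formula already established.
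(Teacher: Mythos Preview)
Your proof is correct and follows essentially the same approach as the paper's: the paper simply asserts that $\delb a=0$ iff $E_\ell\blacktriangleright a=0$ and then invokes Theorem~\ref{kerEl} at $N=0$, while you have unpacked why the ``iff'' holds (the last component of $\delb a$ is $a\triangleleft F_\ell$, and its vanishing forces all the others) and made the identification of the one-dimensional kernel with $\mathbb{C}\cdot 1$ explicit. No substantive difference.
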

\begin{proof}
By (\ref{delbar}) it is obvious that $\delb a=0$ iff $E_\ell\blacktriangleright a=0$. Now the previous lemma for $N=0$ gives the result.
\end{proof}
\subsection{Holomorphic line bundles}
An anti-holomorphic connection on the line bundle $L_N$ is given by
\begin{align*}
&\nabla_N^{\delb}:L_N\rightarrow \Omega^{(0,1)}\otimes_{\mathcal A(\qp^\ell_q)} L_N\\
&\nabla_N^{\delb}(\xi):=q^{-N}\Psi^{\dagger}_N\delb\Psi_N,
\end{align*}
where $\Psi_N$ is a column vector \cite{DD}, given by $\Psi_N:=(\psi^N_{j_1,...,j_{\ell+1}})$ with
\begin{align*}
\psi^N_{j_1,...,j_{\ell+1}}:=[j_1,...,j_{\ell+1}]!^{1/2}(z_1^{j_1}...z_{\ell+1}^{j_{\ell+1}})^*,\quad\forall \,j_1+...+j_{\ell+1}=N.
\end{align*}
This is a flat connection as can be verified directly like \cite{KM}. This gives us the following Dolbeault complex
\begin{align*}
0\rightarrow L_N \rightarrow \Omega^{(0,1)}\otimes_{\mathcal A(\qp^\ell_q)} L_N\rightarrow
\cdots \rightarrow
\Omega^{(0,\ell)}\otimes_{\mathcal A(\qp^\ell_q)}L_N\rightarrow 0\,.
\end{align*}
The structure of the zeroth cohomology group $H^0( L_N,\nabla^{\delb}_N)$ of this complex which is called the
\textit {space of holomorphic sections of $L_N$},
is best described by the following theorem.
 \begin{corollary}
For any positive integer $N$, the space of holomorphic sections of the canonical line bundles of $\qp^\ell_q$ is
\begin{align*}
&H^0(L_N,\nabla_N)\simeq\mathbb{C}^{\binom{N+\ell}{\ell}} ,\\
&H^0(L_{-N},\nabla_{-N})=0.
\end{align*}
\end{corollary}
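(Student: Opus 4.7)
The plan is to identify $H^0(L_N,\nabla_N^{\delb})$ with the kernel of the right action $E_\ell\blacktriangleright$ on $L_N$, and then invoke Theorem~\ref{kerEl} to read off both dimension formulas. By construction, $H^0(L_N,\nabla_N^{\delb})=\ker\bigl(\nabla_N^{\delb}\colon L_N\to\Omega^{(0,1)}\otimes_{\mathcal A(\qp^\ell_q)}L_N\bigr)$, and combining the connection formula $\nabla_N^{\delb}(\xi)=q^{-N}\Psi_N^{\dagger}\delb\Psi_N$ with the explicit expression (\ref{delbar}), the condition $\nabla_N^{\delb}\xi=0$ translates into the vanishing of the $\ell$-tuple $(\xi\triangleleft F_\ell F_{\ell-1}\cdots F_i)_{i=1}^{\ell}$.

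Next, I would show that on $L_N$ this tuple collapses to the single condition $\xi\triangleleft F_\ell=0$, equivalently $E_\ell\blacktriangleright\xi=0$ (since $\theta(E_\ell)=F_\ell$). One direction is immediate, because the last entry of the tuple is precisely $\xi\triangleleft F_\ell$. For the other direction, I would use the right-action identity $\xi\triangleleft(F_\ell F_{\ell-1}\cdots F_i)=(\xi\triangleleft F_\ell)\triangleleft(F_{\ell-1}\cdots F_i)$, which makes every component vanish as soon as $\xi\triangleleft F_\ell=0$. This is precisely the reduction used in the corollary immediately preceding Theorem~\ref{kerEl} for $L_0$, and the same argument applies to every $L_N$ because the derivation of (\ref{delbar}) relies only on the $U_q(\mathfrak u(\ell))$-covariance encoded in (\ref{LN2}), a property shared by sections of every canonical line bundle.

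Having identified $\ker\nabla_N^{\delb}$ with $\ker(E_\ell\blacktriangleright)|_{L_N}$, the two dimension statements follow directly from Theorem~\ref{kerEl}: $\dim\ker E_\ell|_{L_N}=\binom{N+\ell}{\ell}$ for $N\geq 0$, yielding $H^0(L_N,\nabla_N)\simeq\C^{\binom{N+\ell}{\ell}}$, and $\dim\ker E_\ell|_{L_{-N}}=0$ for $N>0$, yielding $H^0(L_{-N},\nabla_{-N})=0$. The main obstacle I expect is the second step, namely the careful verification that the tuple-collapse argument and the identification of $E_\ell$ as the sole obstruction to holomorphicity transport from $L_0$ to arbitrary $L_N$; this is where the distinction between the intrinsic $\delb$ on $\mathcal A(\qp^\ell_q)$ and the frame-based connection $\nabla_N^{\delb}$ built from $\Psi_N$ must be reconciled, in the spirit of the low-rank verifications carried out in \cite{KLS, KM}.
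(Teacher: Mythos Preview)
Your proposal is correct and follows essentially the same route as the paper: identify $\ker\nabla_N^{\delb}$ with $\ker(E_\ell\blacktriangleright)|_{L_N}$ and then invoke Theorem~\ref{kerEl}. The paper's proof is a single sentence asserting this coincidence without detail, whereas you have spelled out the reduction via the component formula $\xi\triangleleft F_\ell F_{\ell-1}\cdots F_i=(\xi\triangleleft F_\ell)\triangleleft(F_{\ell-1}\cdots F_i)$; note only that the relevant computation showing the $i$-th component of $\nabla_N^{\delb}\xi$ equals (up to scalar) $\xi\triangleleft F_\ell\cdots F_i$ is carried out explicitly in the paper's Section~\ref{bimod}, so your justification is fully supported by material elsewhere in the paper.
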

\begin{proof}
It is not difficult to see that the kernel of $\nabla_N^{\delb}$ coincides with the kernel of $E_\ell\blacktriangleright (.)$. Now the result is an obvious consequence of (\ref{kerEl}).
\end{proof}
Here we would like to establish the fact that for any integers $N$ and $M$ we have a bimodule isomorphism$L_N\otimes_{\mathcal A(\qp^{\ell}_q)} L_M\simeq L_{N+M}$. The multiplication map from left to right is an injective $\mathcal A(\qp^{\ell}_q)$-bilinear map. To see that this map is a surjection, we use a $PBW$- basis for $\mathcal A(S^{2\ell+1}_q)$ generated by \[\{z_1^{s_1}z_2^{s_2}\cdots z_{\ell}^{s_{\ell}}(z_1^*)^{t_1}(z_2^*)^{t_2}\cdots (z_{\ell-1}^*)^{t_{\ell-1}},\,z_1^{s_1}z_2^{s_2}\cdots z_{\ell-1}^{s_{\ell-1}}(z_1^*)^{t_1}(z_2^*)^{t_2}\cdots (z_{\ell}^*)^{t_{\ell}}\},
\]
for non-negative integers $s_i$ and $t_i$. Since 
\[K_j\blacktriangleright z_i=z_i, \quad K_j\blacktriangleright z_i^*=z_i^* \, for \,j<\ell\]
and 
\[K_{\ell}\blacktriangleright z_i=q^{1/2}z_i,\quad K_\ell\blacktriangleright z_i^*=q^{-1/2}z_i^*,\]
we have 
\[
 K_1K_2^2\cdots K_\ell^\ell\blacktriangleright Z
=q^{\ell/2\{\sum s_i-\sum t_i\}}Z,
\]
where
\[
Z=z_1^{s_1}z_2^{s_2}\cdots z_{\ell}^{s_{\ell}}(z_1^*)^{t_1}(z_2^*)^{t_2}\cdots (z_{\ell-1}^*)^{t_{\ell-1}}
\]
or
\[
Z=z_1^{s_1}z_2^{s_2}\cdots z_{\ell-1}^{s_{\ell-1}}(z_1^*)^{t_1}(z_2^*)^{t_2}\cdots (z_{\ell}^*)^{t_{\ell}}.
\]
It is obvious that $ Z\in L_N$ iff $\sum s_i-\sum t_i=N$. 
 
Now suppose that $Z=z_1^{s_1}z_2^{s_2}\cdots z_{\ell}^{s_{\ell}}(z_1^*)^{t_1}(z_2^*)^{t_2}\cdots (z_{\ell-1}^*)^{t_{\ell-1}}\in L_{N+M}$ and suppose $k$ is the first positive integer such that $\sum_{i=1}^ks_i>N$. Then take a partition of $N$ as $\sum_{i=1}^k r_i=N$, such that $s_i-r_i\geq 0$. Now the following is a preimage of $Z$.
\[
q^R  \big(z_1^{r_1}z_2^{r_2}\cdots z_{k}^{r_k}\otimes z_1^{s_1-r_1}z_2^{s_2-r_2}\cdots z_k^{s_k-r_k}z_{k+1}^{r_{k+1}}\cdots z_{\ell}^{s_{\ell}}(z_1^*)^{t_1}(z_2^*)^{t_2}\cdots (z_{\ell-1}^*)^{t_{\ell-1}}\big),
\]
where 
\[
R=r_k\{(s_{k-1}-r_{k-1})+\cdots+(s_1-r_1)\}+r_{k-1}\{(s_{k-2}-r_{k-2})+\cdots+(s_1-r_1)\}+
\cdots+r_2(s_1-r_1).
\]
By the above discussion it is obvious that $Z_1:=z_1^{r_1}z_2^{r_2}\cdots z_{k}^{r_k}\in L_N$ and  \[Z_2:=z_1^{s_1-r_1}z_2^{s_2-r_2}\cdots z_k^{s_k-r_k}z_{k+1}^{r_{k+1}}\cdots z_{\ell}^{s_{\ell}}(z_1^*)^{t_1}(z_2^*)^{t_2}\cdots (z_{\ell-1}^*)^{t_{\ell-1}}\in L_M.\] The result is obtained by noting that the product $Z_1Z_2=q^{-R}Z$.

For later use we would like to mention here that $\Omega^{(0,\ell)}\otimes_{\mathcal A(\qp^\ell_q)}L_N\simeq L_{\ell+1}\otimes_{\mathcal A(\qp^\ell_q)} L_N\simeq L_{N+\ell+1}$. In order to see this we recall the definition of $\Omega^{(0,\ell)}:=\mathfrak M(\sigma^0_\ell)$, where $\sigma ^0_k$ is obtained from the representation $\sigma_k:U_q(\mathfrak{su}(\ell))\rightarrow\text{End}(W_k)$ lifted to a representation of $U_q(\mathfrak{u}(l))$ by $\sigma^0_k(\hat K)=q^k Id_{W_k}$ \cite{DD}. We define the $\mathcal A(\qp^{\ell}_q)$-bimodule $\mathfrak M(\sigma):=\{v\in \mathcal A(SU_q(\ell+1))^r\,|\,v\triangleleft h=\sigma(h)v,\, \forall h \in U_q(\mathfrak{u}(\ell))\}$, where $\sigma$ is an $r$-dimensional $*$-representation of $U_q(\mathfrak{u}(\ell))$. So in our case $\sigma^0_\ell$ will be a $1$-dimensional $*$-representation of $U_q(\mathfrak{su}(\ell))$. Hence, any anti-holomorphic $\ell$-form is an element like  $v$ such that $v\triangleleft h=\sigma^0_\ell(h)v$.
The conditions that must hold are:
\begin{align*}
&K_i\blacktriangleright a=a, \quad E_i\blacktriangleright a=F_i\blacktriangleright a=0, \quad i=1,2,...,\ell-1.\\
&K_1K_2^2...K_\ell^\ell\blacktriangleright a=q^{\ell(\ell+1)/2}a.
\end{align*}
This gives us $\Omega^{(0,\ell)}\simeq L_{\ell+1}$.
\section{Bimodule connections}
\label{bimod}
In this section we would like to show that line bundles $L_N$ accept a bimodule connection in the sense of \cite{KLS}.
This means that there exists an isomorphism $\lambda_N:L_N\otimes_{\mathcal A(\qp^{\ell}_q)} \Omega^{(0,1)}\rightarrow \Omega^{(0,1)} \otimes_{\mathcal A(\qp^{\ell}_q)}L_N$ such that
\begin{equation*}
\nabla_N^{\delb}(\xi a):=(\nabla_N^{\delb}\xi)a+\lambda_N(\xi\otimes a).
\end{equation*}
Let us first check the case $N=1$. We will define $\lambda_1:=\alpha_1^{-1}\beta_1$ where,
\begin{align*}
&\alpha_1:\Omega^{(0,1)} \otimes_{\mathcal A(\qp^{\ell}_q)} L_1\rightarrow \mathcal A(SU_q(\ell+1))^{\ell},\\
&\alpha_1((v_1,...,v_{\ell})\otimes \xi):=q^{1/2}(v_1\xi,...,v_{\ell}\xi)
\end{align*}
and
\begin{align*}
&\beta_1:L_1\otimes_{\mathcal A(\qp^{\ell}_q)} \Omega^{(0,1)}\rightarrow \mathcal A(SU_q(\ell+1))^{\ell},\\
&\beta_1(\xi \otimes(v_1,...,v_{\ell})):=q^{-1/2}(\xi v_1,...,\xi v_{\ell}).
\end{align*}
Note that on the generators $p_{jk}:=z_j^*z_k$ of $\mathcal A(\qp^{\ell}_q)$ we will have
\begin{align*}
\delb p_{jk}=\Big((-1)^{\ell}q^{-1/2-(\ell-1)}(u^1_j)^*,...,q^{-3/2}(u^{\ell-1}_j)^*,-q^{-1/2}(u^{\ell}_j)^*\Big)u^{\ell+1}_k,
\end{align*}
and a basis element of $ L_1$ is of the form of $t^n_{\underline 0,\underline i}$, where $n=(n_1,0,...,0,n_1+1)$,
\begin{align*}
|\underline 0\rangle=\begin{bmatrix}
m_{1,\ell+1} & m & \ldots & m &  m_{1,\ell+1}-1\\
m & m & \ldots & m \\
\vdots & \vdots\\
m & m\\
m
\end{bmatrix}
\end{align*}
and
\begin{align*}
|\underline i\rangle=\begin{bmatrix}
m_{1,\ell+1} & m & \ldots m & m &  m_{1,\ell+1}-1\\
x_{\ell} & m & \ldots m & y_{\ell} \\
\vdots & \vdots\\
x_2 & y_2\\
x_1
\end{bmatrix}.
\end{align*}
Any typical element of $L_1\otimes_{\mathcal A(\qp^{l}_q)} \Omega^{(0,1)}$ is a linear combination of $t^n_{\underline 0,\underline i}\otimes p_{rs}\delb p_{jk}$.
We claim that
\[ v:=t^n_{\underline 0,\underline i}p_{rs}\Big((-1)^{\ell}q^{-1/2-(\ell-1)}(u^1_j)^*,...,q^{-3/2}(u^{\ell-1}_j)^*,-q^{-1/2}(u^{\ell}_j)^*\Big)\in \Omega^{(0,1)}
\]
and $u^{\ell+1}_k\in  L_1$. The latter can be easily obtained from the following observation,
\[
u^{\ell+1}_k\triangleleft (K_1K_2^2\cdots K_\ell^\ell)^{2/{\ell+1}}=q^{\ell/{\ell+1}}u^{\ell+1}_k.
\]
So we have to show that $v\triangleleft h=\sigma^0_1(h)v,$ for all $h \in U_q(\mathfrak{u}(\ell))$ . We will check this on generators. We need to know the following actions
\begin{align*}
t^n_{\underline 0,\underline i}\triangleleft h \quad \text{and} \quad (u^r_j)^*\triangleleft h
\end{align*}
for $h=E_i,F_i,K_i$ and $i=1,...,\ell-1$.

For $\sigma^0_1:U_q(\mathfrak{su}(\ell+1))\rightarrow \text{End}(W_1(\simeq\mathbb{C}^\ell))$ we have \cite{DD}
\begin{align*}
&(\sigma^0_1(K_r)w)_I=q^{1/2r\#I}w_I\\
&(\sigma^0_1(E_r)w)_I=\delta_{r\#I,+1}w_{I^{r,+}}\\
&(\sigma^0_1(F_r)w)_I=\delta_{r\#I,-1}w_{I^{r,-}}
\end{align*}
where $I=1,...,\ell$. Here $r\#I=1$ if $r=I$, $r\#I=-1$ if $r=I+1$ and $r\#I=0$ otherwise. For $K_1$ we have
\begin{align*}
\sigma^0_1(K_1)v&=\sigma^0_1(K_1)\Big(t^n_{\underline 0,\underline i}\Big((-1)^{\ell}q^{-1/2-(\ell-1)}(u^1_j)^*,...,q^{-3/2}(u^{\ell-1}_j)^*,-q^{-1/2}(u^{\ell}_j)^*\Big)\Big)\\
&=t^n_{\underline 0,\underline i}\Big((-1)^{\ell}q^{-(\ell-1)}(u^1_j)^*,(-1)^{\ell-1}q^{-1-(\ell-2)}(u^2_j)^*,...,-q^{-1/2}(u^{\ell}_j)^*\Big).
\end{align*}
Note that just a factor of $q^{1/2}$ and $q^{-1/2}$ contributed in the first and second component respectively.
But on the other hand
\begin{align*}
\Big\{t^n_{\underline 0,\underline i}&\Big((-1)^{\ell}q^{-1/2-(\ell-1)}(u^1_j)^*,...,q^{-3/2}(u^{\ell-1}_j)^*,-q^{-1/2}(u^{\ell}_j)^*\Big)\Big\}\triangleleft K_1\\
&=t^n_{\underline 0,\underline i}\triangleleft K_1\Big((-1)^{\ell}q^{-1/2-(\ell-1)}(u^1_j)^*,...,q^{-3/2}(u^{\ell-1}_j)^*,-q^{-1/2}(u^{\ell}_j)^*\Big)\triangleleft K_1\\
&=t^n_{\underline 0,\underline i}\Big((-1)^{\ell}q^{-(\ell-1)}(u^1_j)^*,(-1)^{\ell-1}q^{-1-(\ell-2)}(u^2_j)^*,...,-q^{-1/2}(u^{\ell}_j)^*\Big).
\end{align*}
Here we used the fact that $t^n_{\underline 0,\underline i}\triangleleft K_1=t^n_{\underline 0,\underline i}$ and
$u^i_j\triangleleft K_1=q^{1/2(\delta_{2,i}-\delta_{1,i})}u^i_j$, which follows from
\begin{align*}
u^i_j\triangleleft K_1=\sum_k\langle K_1,u^i_k\rangle u^k_j=\sum_k\delta^i_kq^{1/2(\delta_{2,i}-\delta_{1,i})}u^k_j=q^{1/2(\delta_{2,i}-\delta_{1,i})}u^i_j.
\end{align*}
Now we have
\begin{align*}
u^i_j\triangleleft K_1^{-1}=q^{-1/2(\delta_{2,i}-\delta_{1,i})}u^i_j
\end{align*}
and
\begin{align*}
(u^i_j\triangleleft K_1^{-1})^*=(u^i_j)^*\triangleleft S(K_1^{-1})^*=q^{-1/2(\delta_{2,i}-\delta_{1,i})}(u^i_j)^*.
\end{align*}
Therefore
\begin{align*}
(u^i_j)^*\triangleleft K_1=q^{-1/2(\delta_{2,i}-\delta_{1,i})}(u^i_j)^*.
\end{align*}
The same proof works for other $K_r$'s.

Now let us prove the equality for $E_1$.
\begin{align*}
\sigma^0_1(E_1)v&=\sigma^0_1(E_1)\Big\{t^n_{\underline 0,\underline i}\Big((-1)^{\ell}q^{-1/2-(\ell-1)}(u^1_j)^*,...,q^{-3/2}(u^{\ell-1}_j)^*,-q^{-1/2}(u^{\ell}_j)^*\Big)\Big\}\\
&=t^n_{\underline 0,\underline i}\Big((-1)^{\ell-1}q^{-1/2-(\ell-2)}(u^2_j)^*,0,0,...,0\Big)
\end{align*}
On the other hand
\begin{align*}
\Big\{t^n_{\underline 0,\underline i}&\Big((-1)^{\ell}q^{-1/2-(\ell-1)}(u^1_j)^*,...,q^{-3/2}(u^{\ell-1}_j)^*,-q^{-1/2}(u^{\ell}_j)^*\Big)\Big\}\triangleleft E_1\\
&=t^n_{\underline 0,\underline i}\triangleleft K_1^{-1}\Big((-1)^{\ell}q^{-1/2-(\ell-1)}(u^1_j)^*,...,q^{-3/2}(u^{\ell-1}_j)^*,-q^{-1/2}(u^{\ell}_j)^*\Big)\triangleleft E_1\\
&=t^n_{\underline 0,\underline i}\Big((-1)^{\ell-1}q^{-1/2-(\ell-2)}(u^2_j)^*(u^2_j)^*,0,0,...,0\Big)
\end{align*}
Note that
\begin{align*}
u^i_j\triangleleft E_1=\sum_k\langle E_1,u^i_k\rangle u^k_j=\sum_k\delta^i_2\delta^1_k u^k_j=\delta^i_2u^1_j.
\end{align*}
and $(u^i_j)^*\triangleleft E_1=-q\delta^i_1(u^2_j)^*$. The same argument will work for other $E_r$'s.
\begin{lemma}
With the above notation Im $\alpha_1=$Im $\beta_1$.
\end{lemma}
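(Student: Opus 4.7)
The plan is to establish $\mathrm{Im}\,\beta_1 = \mathrm{Im}\,\alpha_1$ by checking both containments on generators. Since $\Omega^{(0,1)}$ is generated as an $\mathcal{A}(\qp^\ell_q)$-bimodule by the forms $\delb p_{jk}$ and $L_1$ is spanned by the basis $\{t^n_{\underline 0,\underline i}\}$ identified in the previous section, any element of $L_1 \otimes_{\mathcal{A}(\qp^\ell_q)} \Omega^{(0,1)}$ is a linear combination of generators $t^n_{\underline 0,\underline i} \otimes p_{rs}\delb p_{jk}$, and similarly on the other side.

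For $\mathrm{Im}\,\beta_1 \subseteq \mathrm{Im}\,\alpha_1$, I would complete the argument begun just before the lemma. Inserting the explicit form of $\delb p_{jk}$, the image $\beta_1(t^n_{\underline 0,\underline i} \otimes p_{rs}\delb p_{jk})$ factors componentwise, up to the scalar $q^{-1/2}$, as the product of the row vector $v := t^n_{\underline 0,\underline i}\, p_{rs}\,((-1)^{\ell} q^{-1/2-(\ell-1)}(u^1_j)^*,\ldots,-q^{-1/2}(u^\ell_j)^*)$ with the element $u^{\ell+1}_k$. Since $u^{\ell+1}_k \in L_1$ has already been noted, the remaining work is to verify $v \in \Omega^{(0,1)}$ by checking $v\triangleleft h = \sigma^0_1(h) v$ on each generator $h \in U_q(\mathfrak{u}(\ell))$. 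The cases $h=K_1$ and $h=E_1$ are displayed above; the remaining cases $F_1$ together with $K_r, E_r, F_r$ for $1<r<\ell$ are direct analogues, relying on the invariance of $t^n_{\underline 0,\underline i}$ and $p_{rs}$ under $U_q(\mathfrak{u}(\ell))$, the pairings (\ref{pairing}), and the involution identity $(u^i_j \triangleleft h)^* = (u^i_j)^*\triangleleft S(h)^*$. Once $v \in \Omega^{(0,1)}$ is confirmed, $\beta_1(t^n_{\underline 0,\underline i} \otimes p_{rs}\delb p_{jk}) = q^{-1}\alpha_1(v \otimes u^{\ell+1}_k) \in \mathrm{Im}\,\alpha_1$.

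For the reverse inclusion $\mathrm{Im}\,\alpha_1 \subseteq \mathrm{Im}\,\beta_1$ I would run the symmetric manipulation on a typical generator $p_{rs}\delb p_{jk} \otimes t^n_{\underline 0,\underline i}$ of $\Omega^{(0,1)} \otimes_{\mathcal{A}(\qp^\ell_q)} L_1$: expand $\delb p_{jk}$, then commute $t^n_{\underline 0,\underline i}$ to the left past $u^{\ell+1}_k$ and the entries $(u^m_j)^*$ using the defining relations of $\mathcal{A}(SU_q(\ell+1))$, and verify that the resulting row vector still lies in $\Omega^{(0,1)}$ by the same coproduct technique. The principal obstacle I expect is not conceptual but a careful bookkeeping of the $q$-scalars generated by these commutations; one has to confirm that they conspire with the built-in factor $q^{1/2}/q^{-1/2}=q$ between the normalizations of $\alpha_1$ and $\beta_1$, so that the two images match on the nose and $\lambda_1 = \alpha_1^{-1}\beta_1$ is unambiguously defined.
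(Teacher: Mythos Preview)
Your treatment of the inclusion $\mathrm{Im}\,\beta_1 \subseteq \mathrm{Im}\,\alpha_1$ is exactly what the paper does: the verifications preceding the lemma show that $\beta_1(t^n_{\underline 0,\underline i}\otimes p_{rs}\delb p_{jk})$ factors as $q^{-1}\alpha_1(v\otimes u^{\ell+1}_k)$ with $v\in\Omega^{(0,1)}$ and $u^{\ell+1}_k\in L_1$, and the paper's one-line proof simply invokes that discussion.

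The reverse inclusion is where your plan runs into trouble. Commuting a general basis element $t^n_{\underline 0,\underline i}\in L_1$ to the left past $u^{\ell+1}_k$ and the $(u^m_j)^*$ via the defining relations of $\mathcal A(SU_q(\ell+1))$ does \emph{not} produce a scalar multiple of a single product: the relations $[u^i_k,u^j_l]=(q-q^{-1})u^i_lu^j_k$ generate cross terms, and $(u^m_j)^*$ is itself a quantum minor, so you will not obtain a clean factorisation $\eta\cdot w$ with $\eta\in L_1$ on the left. The bookkeeping you flag as the main obstacle is in fact a structural one.

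A cleaner route, symmetric to the first inclusion and still using only the coproduct technique you already have in hand, is to insert the sphere relation $\sum_m q^{2(\ell+1-m)} z_m z_m^*=1$. For $v\in\Omega^{(0,1)}$ and $\xi\in L_1$ write
\[
\alpha_1(v\otimes\xi)=q^{1/2}\,v\xi=q^{1/2}\sum_m q^{2(\ell+1-m)}\,z_m\,(z_m^*v\xi).
\]
Since $z_m\in L_1$, it suffices to check $z_m^*v\xi\in\Omega^{(0,1)}$; but $z_m^*\in L_{-1}$ and $\xi\in L_1$ are both $U_q(\mathfrak{su}(\ell))$-invariant with cancelling $\hat K$-weights, so $z_m^*v\xi$ inherits precisely the $\sigma^0_1$-transformation law of $v$ by the same Leibniz/coproduct argument used before the lemma. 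This gives $\alpha_1(v\otimes\xi)\in\mathrm{Im}\,\beta_1$ without any commutation at all.
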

\begin{proof}
By the discussion before the lemma, the proof is clear.
\end{proof}
Taking the isomorphism $\lambda_1:=\alpha_1^{-1}\beta_1:L_1\otimes_{\mathcal A(\qp^{\ell}_q)} \Omega^{(0,1)}\rightarrow \Omega^{(0,1)} \otimes_{\mathcal A(\qp^{\ell}_q)}L_1$, since $L_N=L_1^{\otimes^N}$, we can define the isomorphism $\lambda_N:L_N\otimes_{\mathcal A(\qp^{\ell}_q)} \Omega^{(0,1)}\rightarrow \Omega^{(0,1)} \otimes_{\mathcal A(\qp^{\ell}_q)}L_N$ by
\[
\lambda_N:=(\lambda_1\otimes 1^{N-1})\circ(1\otimes\lambda_1\otimes 1^{N-2})\circ \cdots\circ(1^{N-1}\otimes\lambda_1).
\]

Now, we prove that $\nabla^{\delb}_N$ has the right $\lambda_N$-twisted Leibniz property.
\begin{proposition}
Taking $\lambda_N$ as above, the following holds
\begin{align*}\label{twisted Leibniz}
\nabla^{\delb}_N(\xi a)=(\nabla^{\delb}_N\xi) a+\lambda_N(\xi\otimes \delb a),
\quad \forall a \in \mathcal A(\qp^{\ell}_q),\quad\forall\xi \in L_N,
\end{align*}
i.e. $\nabla^{\delb}_N$ is a bimodule connection on $L_N$.
\end{proposition}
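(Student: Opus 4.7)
The plan is to verify the twisted Leibniz property in two stages: establish the $N=1$ case by direct computation from the defining formula for $\nabla^{\delb}_1$, then bootstrap to arbitrary $N$ via the tensor-product structure $L_N\simeq L_1^{\otimes N}$ and the inductive definition of $\lambda_N$.

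For the $N=1$ base case, I would apply the graded Leibniz rule for $\delb$ to the product $\Psi_1\xi a$ inside the ambient free module $\mathcal{A}(SU_q(\ell+1))^{\ell+1}$ to obtain
\[
\nabla^{\delb}_1(\xi a)=q^{-1}\Psi^{\dagger}_1\delb(\Psi_1\xi)\,a+q^{-1}\Psi^{\dagger}_1\Psi_1\,\xi\,\delb a=(\nabla^{\delb}_1\xi)\,a+q^{-1}\xi\,\delb a,
\]
where I use the Grassmann-type relation $\Psi^{\dagger}_1\Psi_1=1$ coming from the projector construction of $L_1$. The remaining step is to check that $q^{-1}\xi\,\delb a$ represents $\lambda_1(\xi\otimes\delb a)\in\Omega^{(0,1)}\otimes_{\mathcal{A}(\qp^\ell_q)}L_1$. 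By the definition $\lambda_1=\alpha_1^{-1}\beta_1$ and the explicit formulas for $\alpha_1,\beta_1$ just above the statement, applying $\alpha_1$ to $\lambda_1(\xi\otimes\delb a)$ yields $\beta_1(\xi\otimes\delb a)=q^{-1/2}(\xi(\delb a)_i)_i$, which matches $\alpha_1$ applied to the tensor corresponding to $q^{-1}\xi\,\delb a$ in the ambient module. This identification closes the base case.

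For the inductive step, the bimodule isomorphism $L_N\simeq L_1^{\otimes N}$ established in Section \ref{cpl} together with the definition $\lambda_N=(\lambda_1\otimes 1^{N-1})\circ\cdots\circ(1^{N-1}\otimes\lambda_1)$ match exactly the twist prescribed by the tensor-product formula for bimodule connections recalled in Section \ref{complex}. Thus, provided $\nabla^{\delb}_N$ corresponds under this isomorphism to the $N$-fold tensor-product of $\nabla^{\delb}_1$ with itself, the right $\lambda_N$-twisted Leibniz rule is inherited from the $N=1$ case by $N-1$ iterated applications of the tensor-product formula.

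The main obstacle I anticipate is precisely this last identification of $\nabla^{\delb}_N$ with the iterated tensor-product connection: the combinatorial $q$-factorials $[j_1,\ldots,j_{\ell+1}]!^{1/2}$ appearing in the entries of $\Psi_N$ and the $q^{-N}$ normalization must be shown to reproduce, after contraction, the $q$-factors arising in the iterated composition of $\lambda_1$. Once this compatibility is verified on a generating set of $L_N$ (for instance on monomial products of the generators $u^{\ell+1}_k$ appearing in the $PBW$ basis argument of Section \ref{cpl}), the Leibniz identity propagates cleanly through all $N-1$ tensor factors and the proof is complete.
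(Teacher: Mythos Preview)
Your approach is correct in outline but takes a genuinely different route from the paper. The paper does \emph{not} induct on $N$ via the tensor-product decomposition $L_N\simeq L_1^{\otimes N}$; instead it verifies the identity directly for arbitrary $N$ by a component-wise computation. Concretely, using the explicit description $(\delb b)_i=-b\triangleleft F_\ell F_{\ell-1}\cdots F_i$ together with the weight relations $\xi\triangleleft K_\ell=q^{N/2}\xi$ and $\Psi_N\triangleleft K_\ell=q^{-N/2}\Psi_N$, the paper expands the $i$-th component of $q^{-N}\Psi_N^\dagger\bigl((\Psi_N\xi a)\triangleleft F_\ell\cdots F_i\bigr)$ via the twisted Leibniz rule for the $F_\ell$-action to obtain $q^{-N/2}(\xi\triangleleft F_\ell\cdots F_i)a+q^{-N}\xi(a\triangleleft F_\ell\cdots F_i)$, and then matches the second summand with $\lambda_N(\xi\otimes\delb a)$ by noting that $\lambda_N=\alpha_N^{-1}\beta_N$ introduces exactly the factor $q^{-N}$.

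The obstacle you flag at the end is real and is precisely why the paper avoids your route: the identification of $\nabla^{\delb}_N$ with the $N$-fold tensor product of $\nabla^{\delb}_1$ is the content of the \emph{next} proposition in the paper (the compatibility $\nabla^{\delb}_{N+M}\leftrightarrow\nabla^{\delb}_N\otimes 1+(\lambda_N\otimes 1)(1\otimes\nabla^{\delb}_M)$), and its proof there uses essentially the same direct weight computation. So your inductive strategy would either reverse the logical order of the two propositions or force you to carry out that compatibility check independently---at which point you have done the same amount of work as the paper's direct argument, plus the bookkeeping of the inductive wrapper. The paper's approach is therefore both shorter and self-contained; your approach has the conceptual merit of making the bimodule-connection structure manifestly compatible with tensor products, but it does not save any computation.
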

\begin{proof}
Let us compute first the last (i.e. the $\ell$ th component) of the left hand side. Since $\xi \triangleleft K_{\ell}=q^{N/2}\xi$ and $\Psi_N\triangleleft K_{\ell}=q^{-N/2}\Psi_N$, we will find that
\begin{align}
q^{-N}\Psi_N^{\dagger}((\Psi_N\xi a)\triangleleft F_{\ell})\nonumber
&=q^{-N}\Psi_N^{\dagger}\{(\Psi_N\triangleleft F_{\ell})((\xi a)\triangleleft K_{\ell})+(\Psi_N\triangleleft K_{\ell}^{-1})((\xi a)\triangleleft F_{\ell})\}\nonumber\\
&=q^{-N/2}(\xi \triangleleft F_{\ell})a+q^{-N}\xi (a\triangleleft F_{\ell}).\nonumber
\end{align}
For the $\ell$ th  component of the right hand we have
\begin{eqnarray}
q^{-N/2}(\xi \triangleleft F_{\ell}) a+\lambda_N(\xi \otimes a\triangleleft F_{\ell})\nonumber.
\end{eqnarray}
The previous lemma says that $q^{-N}$ will appear after acting by $\lambda_N$ on the second term.
It can be seen that $\alpha_N$ of both sides coincides. For $i$ th component of the left hand side we have
\begin{align*}
q^{-N}\Psi_N^{\dagger}&((\Psi_N\xi a)\triangleleft F_{\ell}F_{\ell-1}\cdots F_{i})\\
&=q^{-N}\Psi_N^{\dagger}\{(\Psi_N\triangleleft F_{\ell})((\xi a)\triangleleft K_{\ell})+(\Psi_N\triangleleft K_{\ell}^{-1})((\xi a)\triangleleft F_{\ell})\}F_{\ell-1}\cdots F_i\\
&=q^{-N/2}\{(\xi \triangleleft F_{\ell})a+q^{-N}\xi (a\triangleleft F_{\ell})\}F_{\ell-1}\cdots F_{i}\\
&=q^{-N/2}\{(\xi \triangleleft F_{\ell}\cdots F_{i})a+q^{-N}\xi (a\triangleleft F_{\ell}\cdots F_{i})\}.
\end{align*}
For the right hand side we get the same result. Other components will be computed similarly.
\end{proof}
Now we can prove that the two holomorphic structures on $ L_N\otimes_{\mathcal A(\qp^{\ell}_q)} L_M$ and $L_{N+M}$ are identical after the canonical isomorphism of these two spaces.
\begin{proposition}\label{Ln+m}
The tensor product connection
$\nabla_N^{\delb} \otimes 1+ (\lambda_N\otimes 1)(1 \otimes \nabla_M^{\delb})$
 coincides with the holomorphic structure on $L_N\otimes_{\mathcal A(\qp^{\ell}_q)} L_M$
when identified with $L_{N+M}$.
\end{proposition}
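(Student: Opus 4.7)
Under the multiplication isomorphism $L_N\otimes_{\mathcal{A}(\qp^\ell_q)}L_M\simeq L_{N+M}$ of Section \ref{cpl}, sending $\xi\otimes\eta\mapsto\xi\eta$, the assertion reduces to showing
\[
\nabla^{\delb}_{N+M}(\xi\eta)\;=\;(\nabla^{\delb}_N\xi)\,\eta\;+\;(\lambda_N\otimes 1)\bigl(\xi\otimes\nabla^{\delb}_M\eta\bigr),\qquad \xi\in L_N,\ \eta\in L_M.
\]
I plan to verify this componentwise in $\Omega^{(0,1)}\otimes_{\mathcal{A}(\qp^\ell_q)}L_{N+M}$, following the template of the preceding proposition but with $a\in\mathcal{A}(\qp^\ell_q)$ replaced by $\eta\in L_M$.

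The central technical ingredient is the identity
\[
(\xi\eta)\triangleleft F_\ell F_{\ell-1}\cdots F_i \;=\; q^{M/2}\,(\xi\triangleleft F_\ell\cdots F_i)\,\eta \;+\; q^{-N/2}\,\xi\,(\eta\triangleleft F_\ell\cdots F_i),
\]
which I would establish by induction on $\ell-i$ from the coproduct $\Delta(F_j)=F_j\otimes K_j+K_j^{-1}\otimes F_j$. The base case $i=\ell$ follows at once using $\xi\triangleleft K_\ell=q^{N/2}\xi$ and $\eta\triangleleft K_\ell=q^{M/2}\eta$. In the inductive step, acting by $F_j$ (with $j<\ell$) on each summand a priori produces four terms, but the two off-diagonal ones vanish because $\xi\triangleleft F_j=0=\eta\triangleleft F_j$, while the two diagonal ones consolidate using $\xi\triangleleft K_j=\xi=\eta\triangleleft K_j$; this is exactly the same collapse already exploited in formula~(\ref{delbar}) and its proof.

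Feeding this identity into $(\nabla^{\delb}_{N+M}(\xi\eta))_i=-q^{-(N+M)}\Psi_{N+M}^\dagger\bigl((\Psi_{N+M}\xi\eta)\triangleleft F_\ell\cdots F_i\bigr)$ and using the multiplicative compatibility of $\Psi_{N+M}$ with $\Psi_N$ and $\Psi_M$ together with $\Psi_K^\dagger\Psi_K=1$, the first summand reassembles into $(\nabla^{\delb}_N\xi)_i\,\eta$ and the second into $q^{-N}\,\xi\,(\nabla^{\delb}_M\eta)_i$. Since $\lambda_N$ is by definition the $N$-fold iterate of $\lambda_1=\alpha_1^{-1}\beta_1$, and each application of $\lambda_1$ contributes exactly the factor $q^{-1}$ identified in the previous proposition while interchanging the tensor positions, this second summand is precisely $(\lambda_N\otimes 1)(\xi\otimes\nabla^{\delb}_M\eta)_i$.

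The main obstacle I anticipate is not conceptual but combinatorial: one must verify that the multinomial factors $[j_1,\ldots,j_{\ell+1}]!^{1/2}$ in $\Psi_N,\Psi_M,\Psi_{N+M}$ interact compatibly with the $q$-commutation of the generators $z_i$, so that $\Psi_{N+M}^\dagger\bigl((\Psi_N\xi)(\Psi_M\eta)\triangleleft\cdots\bigr)$ factors as $\bigl(\Psi_N^\dagger(\Psi_N\xi\triangleleft\cdots)\bigr)\cdot\bigl(\Psi_M^\dagger(\Psi_M\eta\triangleleft\cdots)\bigr)$ without stray $q$-powers. Once this bookkeeping identity is in place, the two sides match term by term and the proof closes in the same spirit as the preceding proposition.
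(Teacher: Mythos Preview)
Your approach is essentially the same as the paper's and is correct, but you are creating an unnecessary obstacle for yourself. The ``combinatorial bookkeeping'' you anticipate---relating $\Psi_{N+M}$ to $\Psi_N$ and $\Psi_M$ via the $q$-multinomial coefficients---is never needed and the paper does not attempt it.

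The point is that for \emph{any} $K$ and any $\zeta\in L_K$, the first few lines of the paper's argument (exactly as in the preceding proposition) reduce the $i$-th component of $\nabla_K^{\delb}\zeta$ to $-q^{-K/2}\,\zeta\triangleleft F_\ell F_{\ell-1}\cdots F_i$, using only $\Psi_K^\dagger\Psi_K=1$ and $\Psi_K\triangleleft K_\ell^{-1}=q^{K/2}\Psi_K$. Applying this with $K=N+M$, $K=N$, and $K=M$ separately eliminates all three $\Psi$'s from the problem before any comparison is made. Once that is done, your Leibniz identity
\[
(\xi\eta)\triangleleft F_\ell\cdots F_i \;=\; q^{M/2}(\xi\triangleleft F_\ell\cdots F_i)\,\eta \;+\; q^{-N/2}\,\xi\,(\eta\triangleleft F_\ell\cdots F_i)
\]
(whose inductive proof you sketch correctly) is exactly what matches the two sides, with $\lambda_N$ supplying the factor $q^{-N}$ on the second summand. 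So drop the paragraph about factoring $\Psi_{N+M}^\dagger\bigl((\Psi_N\xi)(\Psi_M\eta)\triangleleft\cdots\bigr)$; the rest of your proposal already closes the proof.
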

\begin{proof}
We will look at the last component first.
\begin{align*}
\Big\{\nabla_{N+M}^{\delb}(\xi_1\xi_2)\Big\}_{\ell}
&=q^{-(N+M)}\Psi^{\dagger}_{N+M}\delb\Psi_{N+M}(\xi_1\xi_2) \\
&=q^{-(N+M)}\Psi^{\dagger}_{N+M} (\Psi_{N+M}\xi_1\xi_2)
\triangleleft F_{\ell}\\
&=q^{-(N+M)}\Psi^{\dagger}_{N+M}
(\Psi_{N+M}\triangleleft F_{\ell}) ((\xi_1\xi_2)\triangleleft K_{\ell})\\
&+q^{-(N+M)}\Psi^{\dagger}_{N+M}(\Psi_{N+M}\triangleleft K_{\ell}^{-1})( (\xi_1\xi_2)\triangleleft F_{\ell})\\
&=q^{-\frac{N+M}{2}}(\xi_1\xi_2)\triangleleft F_{l}
\\&= q^{-\frac{N}{2}}(\xi_1 \triangleleft F_{\ell})\xi_2
+q^{-N-M/2}\xi_1(\xi_2 \triangleleft F_{\ell}).
\end{align*}

On the other hand
\begin{align*}
\Big\{((\nabla_N^{\delb}\otimes 1)+(\lambda_N \otimes 1)(1 \otimes
\nabla_M^{\delb}))(\xi_1\otimes \xi_2)\Big\}_{\ell}&=
q^{-N/2}\xi_1 \triangleleft F_{\ell}\otimes \xi_2\\
&+(\lambda_N\otimes 1)(\xi_1 \otimes q^{-M/2}
\xi_2 \triangleleft F_{\ell}).\nonumber
\end{align*}
Interpreting this expression as an element of $\Omega^{(0,1)}\otimes L_{N+M}$,
 after applying the map $\lambda_N$, which gives us $q^{-N}$ on the second summand,
 we will get the same result. The same argument as previous proposition gives the result for other components.
\end{proof}
Now the quantum homogeneous coordinate ring
$R:=\bigoplus_{\substack n\geq 0}H^0( L_N,\nabla_N^{\delb})$ of the quantum projective
space can be described as follows. This result was first obtained for $\ell =1, 2$ in \cite{KLS, KM}
where
its relation with the work in \cite{Av1, Av2} is also explained.
\begin{theorem}\label{thm ring structure}
We have the algebra isomorphism
\begin{align*}
\bigoplus_{\substack n\geq 0}H^0( L_N,\nabla_N^{\delb})\simeq
\frac{\mathbb{C} \langle z_1,z_2,...,z_{\ell}\rangle}{\langle\, z_iz_j-qz_jz_i:1\leq i<j\leq \ell\,\rangle}\nonumber
\end{align*}
\end{theorem}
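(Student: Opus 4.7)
The plan is to build a graded $\mathbb{C}$-algebra map $\Phi$ from the quantum polynomial algebra into $R:=\bigoplus_{N\ge 0}H^0(L_N,\nabla^{\delb}_N)$ by sending each variable to the sphere generator $z_i=u^{\ell+1}_i$, and then to promote $\Phi$ to an isomorphism by matching the already-computed graded dimensions $\binom{N+\ell}{\ell}$. (For the count to work the set of generators has to run up to $z_{\ell+1}$, which is what I will use.)

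First I would verify that each $z_i$ lies in $H^0(L_1,\nabla^{\delb}_1)$. From the Hopf pairing (\ref{pairing}) one reads off $K_\ell\blacktriangleright u^{\ell+1}_i=q^{1/2}u^{\ell+1}_i$ and $K_r\blacktriangleright u^{\ell+1}_i=u^{\ell+1}_i$ for $r<\ell$, so $K_1K_2^2\cdots K_\ell^\ell\blacktriangleright z_i=q^{\ell/2}z_i$, i.e. $z_i\in L_1$. Holomorphicity amounts to $E_\ell\blacktriangleright z_i=z_i\triangleleft F_\ell=0$, which is visible from the pairing since $\sigma^{\ell+1}_k(F_\ell)=0$ for every $k$; by the corollary to Theorem~\ref{kerEl}, $\ker\nabla^{\delb}_N=\ker(E_\ell\blacktriangleright(\cdot))$, so indeed $z_i\in H^0(L_1,\nabla^{\delb}_1)$. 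The quadratic relations $z_iz_j=qz_jz_i$ for $i<j$ are nothing but the row-$(\ell+1)$ commutation relations of $\mathcal A(SU_q(\ell+1))$ applied to $z_i=u^{\ell+1}_i$.

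By Proposition~\ref{Ln+m}, multiplication $L_N\otimes_{\mathcal A(\qp^\ell_q)}L_M\to L_{N+M}$ identifies the tensor-product holomorphic structure with $\nabla^{\delb}_{N+M}$, so a product of holomorphic sections is again holomorphic. Combined with the quadratic relations this furnishes a well-defined graded algebra homomorphism
\[
\Phi:\frac{\mathbb{C}\langle z_1,\ldots,z_{\ell+1}\rangle}{\langle z_iz_j-qz_jz_i\,:\,i<j\rangle}\longrightarrow R,\qquad z_i\longmapsto z_i.
\]
Iterating the quadratic relations puts every word into normal form $z_1^{j_1}\cdots z_{\ell+1}^{j_{\ell+1}}$, so the degree-$N$ piece of the domain is spanned by at most $\binom{N+\ell}{\ell}$ such ordered monomials.

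To close the argument I would invoke the PBW basis of $\mathcal A(S^{2\ell+1}_q)$ recorded in Section~\ref{cpl}: the ordered monomials $z_1^{j_1}\cdots z_{\ell+1}^{j_{\ell+1}}$ occur as the basis elements with every $t_i=0$, hence they are linearly independent in $\mathcal A(S^{2\ell+1}_q)$ and therefore also in $L_N$. This makes $\Phi$ injective in each degree, and matching against the dimension $\binom{N+\ell}{\ell}$ computed in the corollary to Theorem~\ref{kerEl} upgrades $\Phi$ to a degreewise isomorphism, hence an isomorphism of graded algebras. The most delicate point I expect is the PBW spanning in the domain: one must know that the quadratic quantum-plane relations alone suffice to reduce every monomial to the chosen order and produce a basis of exactly size $\binom{N+\ell}{\ell}$, which is the standard diamond-lemma/straightening argument for quantum affine space; every other step is a direct computation with the Hopf pairing and the commutation relations already assembled in the paper.
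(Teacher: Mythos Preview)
Your argument is correct and follows the same route as the paper: both construct the map by sending the abstract generators to the sphere generators $z_i=u^{\ell+1}_i\in H^0(L_1,\nabla^{\delb}_1)$, check the $q$-commutation relations, and use the multiplicativity of the holomorphic structure coming from Proposition~\ref{Ln+m}. The paper's proof is a three-sentence sketch that stops after noting the relations $z_i\otimes z_j-qz_j\otimes z_i=0$ hold in $L_2$; your PBW linear-independence step together with the dimension count $\dim H^0(L_N)=\binom{N+\ell}{\ell}$ from Theorem~\ref{kerEl} supplies exactly the injectivity/surjectivity verification the paper leaves implicit. You are also right that the generating set must run through $z_{\ell+1}$ for the graded dimensions to match---the paper's statement and proof write $z_1,\ldots,z_\ell$, which is an off-by-one slip inconsistent with its own $\binom{1+\ell}{\ell}=\ell+1$ count for $H^0(L_1)$.
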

\begin{proof}
The ring structure on $R$ is coming from the tensor product $L_{N_1}\otimes_{\mathcal A(\qp^{\ell}_q)}L_{N_2} \simeq L_{N_1+N_2}$. Since the basis elements $t^{(0,...,0,1)}_{0,j}$ of $H^0( L_1,\nabla _1^{\delb})$,  as shown in section 2 are $z_j$ for $j=1,2,...,\ell$, one can easily see that $H^0( L_1,\nabla_1^{\delb})=\mathbb Cz_1
\oplus\mathbb Cz_2\oplus\cdots\oplus\mathbb Cz_{\ell}$.
Now the isomorphism follows from the identities
$z_i\otimes_{\mathcal A(\qp^{\ell}_q)}z_j-qz_j\otimes_{\mathcal A(\qp^{\ell}_q)}z_i=0$ in $L_2$, which is obvious.\\
\end{proof}
\section{Existence of a twisted positive Hochschild cocycle for $\qp^\ell_q$}
\label{positive}
In \cite{C1},  Section VI.2, Connes shows
that extremal positive Hochschild cocycles in the sense of \cite{C2} on the algebra of smooth functions on a compact oriented
2-dimensional manifold encode the information needed to define a holomorphic
structure on the surface. There is
a similar result for holomorphic structures on the noncommutative two torus (cf. {\it Loc cit.}).
In particular the positive Hochschild cocycle is defined via the holomorphic structure
and represents the fundamental cyclic  cocycle. In \cite{KLS} a notion of twisted positive Hochschild cocycle is
 introduced
and a similar result is proved for the  holomorphic structure of $\qp^1_q$ and $\qp^2_q$ in \cite{KLS,KM}.
Although the corresponding problem of characterizing holomorphic
structures on higher dimensional (commutative or noncommutative) manifolds via positive Hochschild cocycles is still
open, nevertheless these results suggest regarding (twisted) positive Hochschild cocycles as a possible
framework for holomorphic noncommutative structures. In this section we
prove an analogous result for $\qp^\ell_q$ for all $\ell$.

First we recall the notion of twisted Hochschild and cyclic cohomologies.
Let $\mathcal A$ be an algebra and $\sigma$ an automorphism of $\mathcal A$.
For each $n\geq 0$, $C^n(\mathcal A):=$ Hom$(\mathcal A^{\otimes(n+1)},\mathbb C)$
is the space of \textit{n-cochains} on $\mathcal A$.
Define the space of \textit {twisted Hochschild n-cochains} as
$C^n_{\sigma}(\mathcal A):=$Ker$\{(1-\lambda_{\sigma}^{n+1}):C^n(\mathcal A)\rightarrow C^n(\mathcal A)\}$,
where the \textit{twisted cyclic} map $\lambda_{\sigma}:C^n(\mathcal A)\rightarrow C^n(\mathcal A)$ is defined as
\begin{eqnarray}
(\lambda_{\sigma}\phi)(a_0,a_1,...,a_n)=(-1)^n \phi(\sigma(a_n),a_0,a_1,...,a_{n-1}).\nonumber
\end{eqnarray}

The \textit {twisted Hochschild coboundary} map $b_{\sigma}:C^n(\mathcal A)\rightarrow C^{n+1}(\mathcal A)$ is given by
\begin{align*}
b_{\sigma}\phi(a_0,a_1,...,a_{n+1})=&
\sum_{i=0}^n(-1)^i\phi(a_0,...,a_ia_{i+1},...,a_{n+1})\\
&+(-1)^{n+1}\phi(\sigma(a_{n+1})a_0,...,a_n).\nonumber
\end{align*}
The cohomology of the complex $(C^*_{\sigma}(\mathcal A),b_{\sigma})$ is called
 the \textit
{twisted Hochschild cohomology} of $\mathcal A$.
We also need the notion of \textit {twisted cyclic cohomology} of $\mathcal A$.
It is by definition the cohomology of the complex
$(C^*_{\sigma,\lambda}(\mathcal A),b_{\sigma})$,
where
\begin{eqnarray}
C^n_{\sigma,\lambda}:=Ker\{(1-\lambda):C^n_{\sigma}(\mathcal A)\rightarrow
C^{n+1}_{\sigma}(\mathcal A)\}.\nonumber
\end{eqnarray}

Now we come back to the case of our interest, that is $\qp^\ell_q$.
Let $\tau$ be the fundamental class on $\mathbb{C}P^\ell_q$ defined as in \cite{DL} by a twisted cyclic cocycle
\begin{equation}\label{tau}
\tau(a_0,a_1,a_2,\cdots a_{2\ell}):=\int_h a_0 \ud a_1\ud a_2\cdots\ud a_{2\ell}\,,\quad\forall a_i \in \mathcal A(\qp^\ell_q).
\end{equation}
Here $h$ stands for the Haar state functional of the quantum group $\mathcal A(SU_q(\ell+1))$
which has a twisted tracial property $h(xy)=h(y\sigma(x))$. Here the algebra automorphism $\sigma$
is defined by
\begin{equation}
\sigma:\mathcal A(SU_q(\ell+1))\rightarrow \mathcal A(SU_q(\ell+1)), \quad \sigma(x)=K\triangleright x\triangleleft K.\nonumber
\end{equation}
where $K=(K_1^\ell K_2^{2(\ell-1)}\cdots K_j^{j(\ell-j+1)}\cdots K_\ell^\ell)^{2}$, see\cite{DD}. The map $\sigma$, restricted to the algebra $\mathcal A(\qp^\ell_q)$ is given by
$\sigma(x)=K\triangleright x$. Non-triviality of $\tau$ has been shown in \cite{DL}. Now we recall the definition of a twisted positive Hochschild cocycle as given in \cite{KLS}.
\begin{definition}
A twisted Hochschild 2n-cocycle $\phi$ on a $\ast$-algebra $\mathcal A$ is said to be
twisted positive if the following map defines a positive sesquilinear form on the vector space $\mathcal A^{\otimes(n+1)}$:
\begin{eqnarray}
\langle a_0\otimes a_1\otimes...\otimes a_n,b_0\otimes b_1\otimes...\otimes b_n\rangle=
\phi(\sigma(b_n^*)a_0,a_1,...,a_n,b_n^*,...,b_1^*).\nonumber
\end{eqnarray}
\end{definition}
\subsection{A twisted positive Hochschild cocycle on $\mathbb{C}P^\ell_q$.}
We recall that the set of $(\ell,\ell)$-shuffles (denoted by $S_{\ell,\ell}$) is set of all permutations $\pi\in S_{2\ell}$ such that $ \pi(1)<\pi(2)<\cdots<\pi(\ell)$ and $ \pi(\ell+1)<\pi(\ell+2)<\cdots<\pi(2\ell)$. Here we would like to look at a shuffle $\pi$ as an increasing function from $\{\ell +1,\cdots, 2\ell\}$ to $\{1,2, \cdots 2\ell \}$. Let us define $\theta^{\pi}:\{1,2,\cdots, 2\ell\}\rightarrow \{\pm \}$ by $\theta^{\pi}|_{Im\,\pi}=-$ and $\theta^{\pi}|_{({Im\,\pi})^c}=+$. For any $\pi\in S_{\ell,\ell}$ define
\begin{equation}\label{varphipi}
\varphi_{\pi}(a_0,a_1,\cdots a_{2\ell}):=\int_h a_0(\del^{\theta^{\pi}_1} a_1) (\del^{\theta^{\pi}_2} a_2)\cdots(\del^{\theta^{\pi}_{2\ell}} a_{2\ell}).
\end{equation}
Here $\del^+=\del$, $\del^-=\delb$ and $\theta^{\pi}_i=\theta^{\pi}(i)$. Now suppose that $\pi$ and $\pi'$ are two shuffles that are just different in their values on a single value $i$ such that $|\pi'(i)-\pi(i)|=1$. We define a cochain $\psi_{\pi,\pi'}$ by
\begin{align*}
\psi_{\pi,\pi'}(a_0,a_1,a_2,\cdots,a_{2\ell-1}):=
\int_h a_0(\del^{\theta^{\pi}_1} a_1) (\del^{\theta^{\pi}_2} a_2)\cdots(\del^{\theta^{\pi}_j}\del^{\theta^{\pi'}_j} a_{j})(\del^{\theta^{\pi}_{j+2}} a_{j+1})\cdots(\del^{\theta^{\pi}_{2\ell}} a_{2\ell-1}).
\end{align*}Here $j=min\{\pi(i),\pi '(i)\}$.
It is then easy to prove that $b_{\sigma}\psi_{\pi}=\pm (\varphi_{\pi}-\varphi_{\pi '})$.
The proof is based on the following easy observation.
\[
\del\delb(ab)=a\del\delb b+\del a\delb b-\delb a\del b+(\del\delb a) b.
\]
The term $\del^{\theta^{\pi}_j}\del^{\theta^{\pi'}_j}$ is either $\del\delb$ or $\delb\del$ simply because of our choice of $\pi$ and $\pi '$.

Now we recall an easy combinatorial fact. The number of permutations of $2\ell$ letters including $\ell$ letter $A$ and $\ell$ letter $B$ is $\binom{2\ell}{\ell}=\frac{(2\ell)!}{\ell !\ell !}$. All permutations can be grouped in two groups and in each group there exists an order on permutations $\{\pi_1,...,\pi_r\}$ and $\{\pi '_1,...,\pi '_r\}$ with $r=\frac{1}{2}\binom{2\ell}{\ell}$, such that $\pi_{i+1}$ (respectively $\pi '_{i+1}$), can be obtained from $\pi_i$ (resp. $\pi'_i$) just with replacing the two letters in the spots $j$ and $j+1$ where $1\leq j\leq r-1$. In addition we can always choose $\pi_1=AA\cdots ABB\cdots B$ and $\pi '_1=BB\cdots BAA\cdots A$. The permutation $\pi_r$ has the above mentioned property with respect to one of $\pi'_i$'s.

Now we come back to the case $\mathbb{C}P^{\ell}_q$. We consider a complex structure $(\Omega^{(\bullet,\bullet)}(\mathcal A),\del,\delb)$ on the $*$-algebra $\mathcal A(\qp^{\ell}_q)$ with $*:\Omega^{(p,q)}\rightarrow \Omega^{(q,p)}$ such that $\delb a^*=(\del a)^*$. We have seen that $\Omega^{(0,1)}=\mathfrak M(\sigma ^{0,1})$, where $\sigma^{0,1}$ restricted to $\mathcal U_q(\mathfrak{su}(\ell))$ is the fundamental representation of $\mathcal U_q(\mathfrak{su}(\ell))$ in $\C ^\ell$ and $\sigma^{0,1}(K_1K_2^2\cdots K_\ell^\ell)=q^{\frac{\ell+1}{2}} I$.
The representation $\sigma^{1,0}$ can be obtained from $\sigma^{0,1}$ by conjugation. Define
\[\del a:= \triangleleft (E_\ell, E_\ell E_{\ell-1},\cdots, E_\ell \cdots E_2E_1), \quad \delb a:=\triangleleft (F_\ell \cdots F_2F_1,\cdots F_\ell F_{\ell-1},\, F_\ell).
\]
For an anti-holomorphic 1-form $\omega=(\omega_1,\omega_2,\cdots,\omega_\ell)$ we define
\[
\omega^*:=(-q\omega_\ell^*,q^2 \omega_{\ell-1}^*,\cdots,(-q)^{\ell-1}\omega_2^*,(-q)^{\ell}\omega_1^*).
\]
The property $\delb a^*=(\del a)^*$ holds simply because
\begin{align*}
(a^* \triangleleft F_\ell F_{\ell-1}\cdots F_i)^*=
a \triangleleft S( F_\ell F_{\ell-1}\cdots F_i)^*=
(-q)^{-(\ell-i+1)}a \triangleleft E_\ell E_{\ell-1} \cdots E_i.
\end{align*}
One can define $*$ on anti-holomorphic forms such that  $(\omega \wedge_q \omega ')^*=(-1)^{deg( \omega) deg( \omega ')}\omega '^*\wedge_q \omega^*
$, then extend it to all holomorphic and anti-holomorphic forms with
$\delb a^*=(\del a)^*$.
Note that we can extend $\wedge_q$ on holomorphic forms as \cite{DD}.
One can see that
\begin{align*}
\del a_1 \del a_2\cdots \del a_\ell \delb a_\ell^* \cdots\delb a_2^* \delb a_1^*
&=\del a_1 \del a_2\cdots \del a_\ell (\delb a_\ell)^* \cdots(\delb a_2)^* (\delb a_1)^*\\
&=-\del a_1 \del a_2 \cdots \del a_\ell (\del a_1 \del a_2\cdots\del a_\ell)^*.
\end{align*}
We will need the following simple lemma for future computations.
\begin{lemma}
For any $a_0,a_1,a_2,
\cdots,a_{2\ell+1}$ $\in \mathcal A(\qp^{\ell}_q)$ the following identities hold:
\[
\int_ha_0(\del a_1\cdots\del a_\ell\delb a_{\ell+1}\cdots\delb a_{2\ell})a_{2\ell+1}=\int_h\sigma (a_{2\ell+1})a_0\del a_1\cdots\del a_\ell\delb a_{\ell+1}\cdots\delb a_{2\ell}.
\]
\end{lemma}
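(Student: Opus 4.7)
The identity is a twisted-cyclicity statement for the integration $\int_h$ against the top-degree form $\omega := \del a_1\cdots\del a_\ell\,\delb a_{\ell+1}\cdots\delb a_{2\ell}$, asserting that cycling $a_{2\ell+1}$ from the right past everything to the left picks up exactly one factor of $\sigma$. The natural strategy is to reduce it to the twisted tracial property $h(xy) = h(y\sigma(x))$ of the Haar state on $\mathcal A(SU_q(\ell+1))$, combined with a structural fact about how $\sigma$ acts on the top bi-form.

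My plan has three steps. First, using the explicit formulas $\del a = a \triangleleft (E_\ell, E_\ell E_{\ell-1}, \ldots, E_\ell \cdots E_1)$ and $\delb a = a \triangleleft (F_\ell \cdots F_1, \ldots, F_\ell)$ displayed just before the lemma, I realize $a_0\omega a_{2\ell+1}$ as an honest element of $\mathcal A(SU_q(\ell+1))$: at the top component the $\wedge_q$-product is ordinary algebra multiplication, and the $\mathcal A(\qp^\ell_q)$-bimodule structure on $\Omega^{(\bullet,\bullet)}$ agrees with this multiplication under the identification. Second, I apply $h(xy)=h(y\sigma(x))$ with $x = a_0\omega$, $y = a_{2\ell+1}$ to get
\[
\int_h a_0\omega\, a_{2\ell+1} \;=\; h\bigl(a_{2\ell+1}\,\sigma(a_0)\,\sigma(\omega)\bigr).
\]
Third, I use the key structural fact $\sigma(\omega)=\omega$: the top bi-form lives in $\Omega^{(\ell,\ell)}$, which through $\Omega^{(0,\ell)}\simeq L_{\ell+1}$ (from the end of Section \ref{cpl}) and the mirror identification $\Omega^{(\ell,0)}\simeq L_{-(\ell+1)}$ is the trivial line bundle $L_0 \simeq \mathcal A(\qp^\ell_q)$; hence the right $U_q(\mathfrak u(\ell))$-character of $\omega$ is trivial, and combined with the left $\hat K$-invariance inherited from $\mathcal A(\qp^\ell_q)$ this forces $\sigma(x) = K\triangleright x\triangleleft K$ to act as the identity on $\omega$. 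A final rearrangement — a second application of the twisted tracial property together with the consequent $\sigma$-invariance $h\circ\sigma = h$ (obtained by setting $y=1$ in $h(xy)=h(y\sigma(x))$) — brings $h(a_{2\ell+1}\sigma(a_0)\omega)$ into the form $h(\sigma(a_{2\ell+1})a_0\omega)$, which is the right-hand side of the lemma.

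The main obstacle is the third step, specifically verifying $\sigma(\omega)=\omega$. Unpacked, this is a bookkeeping calculation of $K_r$-weights: the composite right actions $E_\ell\cdots E_i$ on the holomorphic block and $F_\ell\cdots F_j$ on the anti-holomorphic block contribute specific $K_r$-shifts, and these must combine to exactly cancel the left $K$-weight coming from $K=(K_1^\ell K_2^{2(\ell-1)}\cdots K_\ell^\ell)^2$ in the definition of $\sigma$. This is precisely the statement that $\Omega^{(\ell,\ell)}$ is trivial as a $U_q(\mathfrak u(\ell))$-equivariant line bundle, which is the same flavor of weight matching used to establish $\Omega^{(0,\ell)}\simeq L_{\ell+1}$ at the end of Section \ref{cpl}. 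Once this weight cancellation is in hand, the rest of the proof is just two applications of the Haar state's twisted tracial property.
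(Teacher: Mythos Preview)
Your step 3 contains a genuine error: the claim $\sigma(\omega)=\omega$ for the generic top form $\omega = \del a_1\cdots\del a_\ell\,\delb a_{\ell+1}\cdots\delb a_{2\ell}$ is false. The identification $\Omega^{(\ell,\ell)}\simeq L_0 = \mathcal A(\qp^\ell_q)$ only tells you that $\omega$ lies in $\mathcal A(\qp^\ell_q)$; but the paper states explicitly (just before the lemma) that $\sigma$ restricted to $\mathcal A(\qp^\ell_q)$ is $\sigma(x)=K\triangleright x$, and this left $K$-action is nontrivial on generic elements. Your justification invokes ``left $\hat K$-invariance inherited from $\mathcal A(\qp^\ell_q)$'', but the defining condition $\mathcal L_{\hat K}a=a$ is a \emph{right}-action condition (recall $\mathcal L_h a = a\triangleleft S^{-1}(h)$), and in any case $K\neq\hat K$. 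So the weight-matching you sketch could at best yield $\omega\triangleleft K=\omega$; it can never give $K\triangleright\omega=\omega$. Once $\sigma(\omega)\neq\omega$, your step 4 rearrangement also collapses, since you are left with $h(a_{2\ell+1}\sigma(a_0)\sigma(\omega))$ and no way to remove the extra $\sigma$'s.

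The paper's argument avoids this trap by a different decomposition: it writes the top form as $x\omega_0$, where $\omega_0$ is a \emph{central} basis element of the free rank-one bimodule $\Omega^{(\ell,\ell)}$ and $x\in\mathcal A(\qp^\ell_q)$ is the coefficient. Centrality gives $a_0x\omega_0\, a_{2\ell+1}=a_0x a_{2\ell+1}\,\omega_0$; the volume element then strips off under $\int_h$, and one is left with the scalar identity $h(a_0xa_{2\ell+1})=h(\sigma(a_{2\ell+1})a_0x)$, which is precisely the twisted trace property of $h$. The point is that $a_{2\ell+1}$ is commuted past the (central) volume element \emph{before} any twist is applied; your route instead tries to push $a_{2\ell+1}$ past the whole form via $\sigma$, which forces you to control $\sigma$ on the form itself, and that control is not available.
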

\begin{proof}
The space of $\Omega^{(\ell,\ell)}$ is a rank one free $\mathcal A(\qp^\ell_q)$-module. Let $\omega$ be the central basis element for the space of $\Omega^{(\ell,\ell)}$
and let $\del a_1\cdots\del a_\ell\delb a_{\ell+1}\cdots\delb a_{2\ell}=x\omega$. Then
\begin{align*}
&\int_h\Big\{a_0(\del a_1\cdots\del a_\ell\delb a_{\ell+1}\cdots\delb a_{2\ell})a_{2\ell+1}-\sigma (a_{2\ell+1})a_0\del a_1\cdots\del a_\ell\delb a_{\ell+1}\cdots\delb a_{2\ell}\Big\}\\
=&\int_h(a_0 x\omega a_{2\ell+1}-\sigma(a_{2\ell+1})a_0x\omega)\nonumber\\=&\int_h(a_0 x a_{2\ell+1}\omega-\sigma(a_{2\ell+1})a_0x\omega )&\nonumber\\
=&\,h(a_0 xa_{2\ell+1}-\sigma(a_{2\ell+1})a_0x)=0.\nonumber
\end{align*}
The last equality comes from the twisted property of the Haar state.
\end{proof}
Using $\ud=\partial+\delb$, we have
\[
\tau=\sum_{\pi\in S_{\ell,\ell}}\varphi_{\pi},
\]
where $\varphi_{\pi}$ is given by (\ref{varphipi}).
Let $\pi_1=id$, i.e. $\pi_1$ is the shuffle that keeps every letter at the same spot. Define the Hochschild cocycle
\begin{align}\label{phi}
\varphi:=-2r\varphi_{\pi_1},
\end{align}
where $r=\frac{1}{2}\binom{2\ell}{\ell}$.
\begin{theorem}
The 2$\ell$-cocycle $\varphi$ defined by (\ref{phi}), is a twisted positive Hochschild cocycle and it  is cohomologous to the fundamental twisted cyclic cocycle $\tau$.
\end{theorem}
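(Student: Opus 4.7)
The plan splits into three tasks: verifying $\varphi$ is a twisted Hochschild cocycle, showing it is cohomologous to $\tau$, and checking twisted positivity. For the cocycle property, I would verify $b_\sigma \varphi_{\pi_1} = 0$ directly. Expanding $b_\sigma$ and applying the graded Leibniz rule for $\del$ and $\delb$ at each of the $2\ell$ positions of the integrand produces a telescoping cancellation identical in spirit to the classical argument that $a_0 \mapsto \int a_0 \ud a_1 \cdots \ud a_n$ is a Hochschild cocycle whenever the differential squares to zero; the residual endpoint term is precisely absorbed by the twist $\varphi_{\pi_1}(\sigma(a_{2\ell+1})a_0, a_1, \ldots, a_{2\ell})$ thanks to the twisted tracial property of the Haar state recorded in the lemma above.

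For cohomology with $\tau$, I would start from the decomposition $\tau = \sum_{\pi \in S_{\ell,\ell}} \varphi_\pi$ and exploit the combinatorial chain of shuffles spelled out before the theorem. The $\binom{2\ell}{\ell} = 2r$ shuffles form two chains $\pi_1, \ldots, \pi_r$ and $\pi'_1, \ldots, \pi'_r$ whose consecutive entries differ by an adjacent swap, together with one extra adjacent-swap edge linking $\pi_r$ to some $\pi'_{i_0}$. For each such adjacent pair the identity $b_\sigma \psi_{\pi, \pi''} = \pm(\varphi_\pi - \varphi_{\pi''})$, derived from the Leibniz expansion $\del\delb(ab) = a\del\delb b + \del a \delb b - \delb a \del b + (\del\delb a)b$ as indicated in the excerpt, forces $\varphi_\pi$ and $\varphi_{\pi''}$ to be Hochschild cohomologous. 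Transitively every $\varphi_\pi$ is cohomologous to $\varphi_{\pi_1}$, so $\tau$ is cohomologous to a scalar multiple of $\varphi_{\pi_1}$; tracking the accumulated signs from $b_\sigma$ and from the Leibniz identity along each chain produces precisely the coefficient stipulated by $\varphi = -2r \varphi_{\pi_1}$.

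For twisted positivity, I would compute the sesquilinear form $\varphi(\sigma(b_\ell^*)a_0, a_1, \ldots, a_\ell, b_\ell^*, \ldots, b_1^*)$ directly. Substituting $\delb b_i^* = (\del b_i)^*$ and using the skew $*$-compatibility $(\omega \wedge_q \omega')^* = (-1)^{\deg\omega \deg\omega'}\omega'^* \wedge_q \omega^*$ already established, the trailing string $\delb b_\ell^* \cdots \delb b_1^*$ reverses into $\pm(\del b_1 \cdots \del b_\ell)^*$, with the overall sign and the scalar $-2r$ absorbing into a single positive prefactor. After using the lemma to move $\sigma(b_\ell^*)$ to the appropriate side of the top-degree form, the integrand takes the shape $\eta\,\eta^*$ with $\eta = a_0 \del a_1 \cdots \del a_\ell$, and positivity then follows from positivity of the Haar state $h$ on $\mathcal A(SU_q(\ell+1))$.

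The most delicate part of the argument is the simultaneous sign and constant tracking that links steps two and three: the coefficient $-2r$ must arise both from the telescoped chain-of-shuffles computation (so that $\tau$, not just $\pm \tau$, equals $\varphi$ modulo $b_\sigma$-coboundaries) and from the rearrangement of the top-degree $(\ell,\ell)$-form in the positivity step (so as to yield a genuine $\eta\eta^*$ rather than $-\eta\eta^*$). Pinning down this common normalization, through the $q$-powers from the $\hat K$-action and the $*$-structure on $\Omega^{(\ell,0)}$, is the real content; the remaining Leibniz and Haar-state manipulations are routine bookkeeping.
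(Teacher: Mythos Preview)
Your proposal is correct and matches the paper's proof almost step for step: the paper also breaks into the twisted cochain property, the derivation of $\tau-\varphi=b_\sigma\psi$ from the adjacent-shuffle coboundary relations $b_\sigma\psi_{\pi,\pi'}=\pm(\varphi_\pi-\varphi_{\pi'})$ (packaged there as an explicit linear system in the coefficients of $\psi$ rather than a pure telescoping argument, but amounting to the same thing), and the positivity computation landing on $2r\,h(\mu\mu^*)\geq 0$ via the Haar state exactly as you describe. The only presentational difference is that the paper checks the $\sigma$-invariance $\varphi(\sigma(a_0),\ldots,\sigma(a_{2\ell}))=\varphi(a_0,\ldots,a_{2\ell})$ and then gets $b_\sigma\varphi=0$ for free from the cohomology with $\tau$, rather than verifying $b_\sigma\varphi_{\pi_1}=0$ directly as you propose.
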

\begin{proof}
We first verify the twisted cocycle property.
\begin{align*}
\varphi(\sigma(a_0),\sigma(a_1),\sigma(a_2)&,\cdots,\sigma(a_{2\ell}))\\&=2r\int_h \sigma(a_0) \partial \sigma (a_1) \cdots \del\sigma (a_\ell)
\delb \sigma (a_{\ell+1}) \cdots\delb \sigma (a_{2\ell})\nonumber\\
&=2r\int_h K \triangleright (a_0\partial a_1 \cdots \partial a_\ell
\delb a_{\ell+1}\cdots \delb a_{2\ell})\\
&= 2r\,\epsilon(K)\int_h a_0\partial a_1
\cdots\partial a_\ell \delb a_{\ell+1}\cdots \delb a_{2\ell}\nonumber\\
&=\varphi(a_0,a_1,a_2,\cdots,a_{2\ell})\nonumber.
\end{align*}
For positivity one can see that
\begin{align}
\varphi(\sigma (a_0^*)a_0,a_1,a_2,\cdots,a_\ell,a_\ell^*,\cdots,a_2^*,a_1^*)&=-2r
\int_h \sigma(a_0^*)a_0 \del a_1 \del a_2 \cdots\del a_\ell \delb a_\ell^* \cdots\delb a_2^* \delb a_1^*\nonumber\\
&=-2r\int_ha_0 \del a_1 \del a_2 \cdots\del a_\ell \delb a_\ell^* \cdots\delb a_2^* \delb a_1^*a_0^*\nonumber\\
&=2r\int_h(a_0 \del a_1 \del a_2 \cdots\del a_\ell)(a_0 \del a_1 \del a_2 \cdots\del a_\ell)^*.\nonumber
\end{align}
One can take $\del a_i=(v^i_1,v^i_2,\cdots,v^i_\ell)$, then using the multiplication rule of type (1,0) forms (for (0,1) forms c.f. \cite{DD}), we find that
$(a_0 \del a_1 \del a_2 \cdots \del a_3)(a_0 \del a_1 \del a_2 \cdots \del a_3)^*=\mu\mu^*$,
where
\[
\mu=a_0\sum_{\pi\in S_\ell}(-q^{-1})^{||\pi||} v^1_{\pi(1)}v^2_{\pi(2)}\cdots v^\ell_{\pi(\ell)}.
\] Hence
\[
\varphi(\sigma (a_0^*)a_0,a_1,a_2,\cdots,a_\ell,a_\ell^*,\cdots,a_2^*,a_1^*)=2r \,h(\mu\mu^*)\geq 0.
\]
Here we used the positivity of the Haar functional $h$.

Now we would like to find the coefficients $m, k$ such that $m\tau-k\varphi_{\pi_1}=b_{\sigma}\psi$ for a suitable $(2\ell-1)$-cocycle $\psi$. Here we order all $\varphi_{\pi}$'s as explained at the beginning of the section, i.e. we use the order for permutations of $\partial$ and $\delb$ to make two sets $\{\varphi_{\pi_1},\varphi_{\pi_2},...,\varphi_{\pi_r}\}$ and $\{\varphi_{\pi '_1},\varphi_{\pi'_2},...,\varphi_{\pi'_r}\}$, where $r=\frac{1}{2}\binom{2\ell}{\ell}$.
For instance we give the formula for one choice of $\varphi_{\pi_2}$.
\begin{align*}
\varphi_{\pi_2}(a_0,a_1,...,a_{2\ell}):=\int_h a_0\delb a_1\delb a_2\cdots\delb a_{\ell-1}\partial a_{\ell}\delb a_{\ell+1}\partial a_{\ell+2}\cdots\partial a_{2\ell}.
\end{align*}
One can show that there exist $2r-1$ twisted cochains $\psi_{\pi,\pi'}$ such that
\begin{align}\label{cohomolog}
&b_{\sigma}\psi_{\pi_1,\pi_2}=\varphi_{\pi_1}-\varphi_{\pi_2},\nn\\
&b_{\sigma}\psi_{\pi_2,\pi_3}=\varphi_{\pi_2}-\varphi_{\pi_3},\nn\\
&
\quad\vdots\nonumber\\
&b_{\sigma}\psi_{\pi_{r-1},\pi_r}=\varphi_{\pi_{r-1}}-\varphi_{\pi_r},\nn\\
&b_{\sigma}\psi_{\pi_{r},\pi'_k}=\varphi_{\pi_{r}}-\varphi_{\pi'_k},\nn\\
&b_{\sigma}\psi_{\pi'_1,\pi'_2}=\varphi_{\pi'_1}-\varphi_{\pi'_2},\nn\\
&b_{\sigma}\psi_{\pi'_2,\pi'_3}=\varphi_{\pi'_2}-\varphi_{\pi'_3},\nn\\
&
\quad\vdots\nn\\
&b_{\sigma}\psi_{\pi'_{r-1}\pi'_{r}}=\varphi_{\pi'_{r-1}}-\varphi_{\pi'_r}
\end{align}
For instance $\psi_{\pi_1,\pi_2}$ (up to a $\pm$ sign) is defined by
\begin{equation*}
\psi_{\pi_1,\pi_2}(a_0,a_1,...,a_{2\ell-1}):=\int_ha_0\partial a_1...\partial a_{\ell-1}(\partial\delb a_{\ell}) \delb a_{\ell+1}...\delb a_{2\ell-1}.
\end{equation*}
Define
\[
\psi:=\sum_{i=1}^{r-1}x_i\psi_{\pi_i,\pi_{i+1}}+x_r\psi_{\pi_i,\pi'_{k}}+\sum_{i=1}^{r-1}x_{r+i}\psi_{\pi'_i,\pi'_{i+1}},
\]
 with constants $x_i$'s $i=1,2,\cdots,2r-1$ have to be determined. We find the following linear system of equations for $m\tau-k\varphi_{\pi_1}=b_{\sigma}\psi$.
\begin{align*}
  \begin{cases}
  m-k-x_1=0\\
  m+x_1-x_2\\
\vdots\\
m+x_{r-1}-x_{r}=0\\
m+x_{r+1}=0\\
m+x_{r+1}-x_{r+2}=0\\
\vdots\\
m+x_{r+k-1}-x_{r+k}=0\\
m+x_{r}-x_{r+k-1}-x_{r+k}=0\\
m+x_{r+k}-x_{r+k+1}=0\\
\vdots\\
m+x_{2r-2}-x_{2r-1}=0\\
m+x_{2r-1}=0
\end{cases}
\end{align*}
This system has the one parameter family of solutions given by
\[x_i=-(2r-i)m \quad \text{for} \quad i\in\{1,2,\cdots, 2r-1\}-\{r+1\},\quad x_{r+1}=-m, \quad k=2rm.
\]
For $m=1$, we have $\tau-2r\varphi_1=b_{\sigma}\psi$.
Note that $\psi_i$'s are defined up to sign.
\end{proof}

\section{The Riemann-Roch theorem for $\qp^\ell_q$, $\ell=1,2$}
First recall that, for classical projective space $\mathbb CP^n$, its sheaf (or equivalently Dolbeault) cohomology with coefficients in the sheaf of holomorphic sections of line bundles $\mathcal O(m)$ are given by
\begin{align*}
H^i(\mathbb CP^n,\mathcal O(m))=\begin{cases}
\mathbb C[z_0,z_1,...,z_n]_m&\text{if}\quad i=0,\,m\geq 0,\\
0&\text{if}\begin{cases}i=0,\,m<0\\0<i<n\\i=n,\, m>-n-1\end{cases}\\
H^0(\mathbb CP^n, \mathcal O(-m-n-1))^*&\text{if}\quad i=n,\,m\leq-n-1.
\end{cases}
\end{align*}
Therefore for the holomorphic Euler characteristic of $\mathcal O(m)$, we get
\begin{align*}
\chi (\qp^1,\mathcal O(m)):&=\text{dim}\, H^0(\qp^1,\mathcal O(m))-\text{dim}\, H^1(\qp^1,\mathcal O(m))=m+1.
\end{align*}
\subsection{ The case of $\qp^1_q$}
This last formula has an analog in the case of $\qp^1_q$. The zeroth cohomology has been computed in \cite{KLS}, but for completeness we recall it here again.
First let us recall that finite dimensional irreducible representations of $U_q(\mathfrak{su}(2))$ are given by vector spaces $V_l$, where $2l\in\mathbb N$ with basis $|l,m\rangle,\, m\in\{-l,...,l\}$. The action on generators are given by
\begin{align*}
K|l,m\rangle&=q^m|l,m\rangle,\\
E|l,m\rangle&=\sqrt{[l-m+1][l+m]}\,|l,m-1\rangle,\\
F|l,m\rangle&=\sqrt{[l+m+1][l-m]}\,|l,m+1\rangle.
\end{align*}
We will have the isomorphism $ A(SU_q(2))=\bigoplus V_l\otimes V_l^*$ and under this isomorphism
the space of canonical quantum line bundle $ L_N:=\{a\in A(SU_q(2))|\quad h\triangleright a=q^{N/2}a\}$ corresponds to $\{|l,N/2\rangle\otimes|l,m\rangle|\quad l\geq|N/2|,\, m=-2l,...,2l\}$. From now on we will use the notation $|l,n,m\rangle=|l,n\rangle\otimes|l,m\rangle$.

The anti-holomorphic part of the connection on  $L_N$ is given by $\nabla^{ \delb}|l,\frac{N}{2},n\rangle:=E|l,\frac{N}{2},n\rangle$. Consider the Dolbeault complex of $\qp^1_q$
\begin{equation}
0\rightarrow L_N\rightarrow\Omega^{(0,1)}\otimes L_N\rightarrow0,\nonumber
\end{equation}
or equivalently
\begin{equation}
0\rightarrow L_N\rightarrow L_{N-2}\rightarrow 0.\nonumber
\end{equation}

One can easily see that $\nabla^{\delb}\xi=E|l,\frac{N}{2},m\rangle=\sqrt{[l-\frac{N}{2}+1][l+\frac{N}{2}]}|l,\frac{N}{2}-1,m\rangle$. To find the holomorphic Euler characteristic $\chi(\qp^1_q,L_N)$, we will consider the following three cases.
\\\\
$\bullet$ {$N\geq 2$}.\\\\
In this case, the kernel of $\nabla^{\delb}$ is zero, simply because $l+\frac{N}{2}$ cannot be zero and $l-\frac{N}{2}+1$ is zero only if $l=\frac{N}{2}-1$, which is impossible in this case, since by assumption $l\geq\frac{N}{2}$.
The Image of $\nabla^{\delb}$ will be generated by the basis elements $|l,\frac{N}{2}-1,m\rangle$ with $l\geq\frac{N}{2}$.
But it differs from basis of $ L_{N-2}$ by elements $|\frac{N}{2}-1,\frac{N}{2}-1,m\rangle$ which can be counted as $N-1$
elements.\\\\
$\bullet$ {$N=1$}.\\\\
Here we have $\nabla^{\delb}\xi=\sqrt{[l-\frac{1}{2}+1][l+\frac{1}{2}]}|l,\frac{1}{2}-1,m\rangle$.
So $E|l,\frac{1}{2},m\rangle=[l+\frac{1}{2}]|l,-\frac{1}{2},m\rangle$ and it is not hard to see that Im$\nabla^{\delb}=L_{N-2}$. The same argument as case $N\geq 2$ shows that Ker$\nabla^{\delb}=0$. Hence $\chi(\qp^1_q, L_N)=0$.\\
\\
$\bullet$ $N\leq 0$.\\\\
If $N\leq0$, $l+\frac{N}{2}=0$ when $l=-\frac{N}{2}$ and this gives the set $\{|-\frac{N}{2},\frac{N}{2},m\rangle|\,m=\frac{N}{2},\frac{N}{2}+\frac{1}{2},...,-\frac{N}{2}\}$ as a basis for the space of holomorphic sections of $\mathcal L_N$.
So dim Ker $\nabla^{\delb}=|N|+1$. In a similar manner to case $N=1$ one can show that the map $\nabla^{\delb}$ is surjective. Therefore we will come to the following result
\begin{align*}
\chi(\qp^1_q, L_N)=-N+1.
\end{align*}
Note that there is a switch between $N$ and $-N$ with respect to the classical case.

\subsection{Serre duality for $\qp^2_q$}
There exists a non-degenerate pairing $\langle\,,\rangle: L_N \times L_{-N}\rightarrow \mathbb {C}$, given by
\begin{equation}
\langle \xi,\eta\rangle:=h(\xi\eta),\quad \forall \xi\in L_N,\quad\forall \eta\in L_{-N}.
\end{equation}
Here $h$ is the Haar state of the quantum group $\mathcal A(SU_q(3))$. The map is obviously bilinear and the nondegeneracy comes from the facts that $L_N^*\subset L_{-N}$ and $h$ is faithful. Now consider the $(0,q)$-Dolbeault complex of $\qp^2_q$
\begin{equation}
0\rightarrow L_N\rightarrow\Omega^{(0,1)}\otimes L_N\rightarrow\Omega^{(0,2)}\otimes L_N\rightarrow 0.
\end{equation}
We would like to state an analogue of Serre duality theorem for this complex as
\begin{proposition}
There exists a non-degenerate pairing defined by
\begin{align*}
&\langle\,,\,\rangle:H^2(\nabla, L_N)\times H^0(\nabla,L_{-N-3})\rightarrow\mathbb {C}\\
&\langle[\xi],[\eta]\rangle:=h(\xi\eta), \quad\forall \xi\in L_{N+3},\quad\forall\eta\in L_{-N-3}.
\end{align*}
\end{proposition}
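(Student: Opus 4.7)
The plan is to mirror the classical proof of Serre duality, using integration-by-parts to show the pairing descends to cohomology and faithfulness of the Haar state for non-degeneracy.

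First, I would identify representatives. By the isomorphism $\Omega^{(0,2)}\otimes_{\mathcal A(\qp^2_q)} L_N\simeq L_{N+3}$ established in Section~\ref{cpl} (the case $\ell=2$ of $\Omega^{(0,\ell)}\simeq L_{\ell+1}$), every class $[\xi]\in H^2(\nabla,L_N)$ has a representative $\xi\in L_{N+3}$. Since multiplication gives $L_{N+3}\cdot L_{-N-3}\subseteq L_0=\mathcal A(\qp^2_q)$, the expression $h(\xi\eta)$ makes sense on representatives and is visibly bilinear.

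Second, I would show that the pairing descends to cohomology. Suppose $\xi=\nabla^{\delb}_N(\omega)$ for some $\omega\in\Omega^{(0,1)}\otimes L_N$. Applying the tensor-product-connection identity of Proposition~\ref{Ln+m} to $L_N\otimes L_{-N-3}\simeq L_{-3}$ (extended to forms via the graded Leibniz rule) yields
\[
\nabla^{\delb}_{-3}(\omega\cdot\eta) \;=\; (\nabla^{\delb}_N\omega)\cdot\eta \;\pm\; (\lambda_N\otimes 1)\bigl(\omega\otimes \nabla^{\delb}_{-N-3}\eta\bigr).
\]
Since $\eta$ is holomorphic the second term vanishes and $\xi\cdot\eta=\nabla^{\delb}_{-3}(\omega\cdot\eta)$; under the identification $\Omega^{(0,2)}\otimes L_{-3}\simeq L_0=\mathcal A(\qp^2_q)$, formula~\eqref{delbar} expresses this element as a sum of terms of the form $c\triangleleft F_j\cdots F_1$. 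Left-invariance of the Haar state, $h(c\triangleleft X)=\epsilon(X)h(c)$ for $X\in U_q(\mathfrak{su}(3))$, then forces each such term to vanish since $\epsilon(F_j)=0$, giving $h(\xi\eta)=0$.

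Third, for non-degeneracy the easy half uses positivity of $h$: given a non-zero holomorphic $\eta\in L_{-N-3}$, the element $\eta^*\in L_{N+3}$ satisfies $h(\eta^*\eta)>0$ by faithfulness of the Haar state, so $[\eta^*]\in H^2(\nabla,L_N)$ pairs non-trivially with $[\eta]$. For the converse, I would decompose $L_{N+3}$ and $L_{-N-3}$ into isotypical components under the $\mathcal A(SU_q(3))$-coaction via Peter-Weyl; the Haar pairing is non-degenerate on each matched component. It then suffices to show that $\mathrm{Im}(\nabla^{\delb}_N)\subseteq L_{N+3}$ is precisely the $h$-orthogonal complement of $\ker(\nabla^{\delb}_{-N-3})=H^0(\nabla,L_{-N-3})\subseteq L_{-N-3}$, which can be verified component by component from the explicit GT-tableaux description of Theorem~\ref{kerEl} and its negative-$N$ analog.

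The main obstacle is the orthogonality matching in the final step. A more robust route is a dimension count: compute $\dim H^2(\nabla,L_N)$ explicitly in the GT basis (parallel to Theorem~\ref{kerEl}) and match it with $\dim H^0(\nabla,L_{-N-3})=\binom{-N-1}{2}$ for $N\le-3$ (both zero otherwise); combined with the easy half of non-degeneracy, this forces the pairing to be perfect. A subtle issue throughout is tracking the canonical identifications between tensor products of line bundles and top anti-holomorphic forms, so that the Stokes-type vanishing in step two really does reduce to left-invariance of $h$.
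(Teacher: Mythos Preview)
Your approach matches the paper's in spirit but is considerably more elaborate. The paper's proof of well-definedness is a two-line Leibniz computation: if $\xi-\xi'=\delb\alpha$ then $h((\xi-\xi')\eta)=h(\delb\alpha\cdot\eta)=h(\delb(\alpha\eta))-h(\alpha\,\delb\eta)=0$, using only that $\eta$ is holomorphic and that $h$ annihilates $\delb$-exact elements. You reach the same conclusion, but routing through the tensor-product connection of Proposition~\ref{Ln+m} and the explicit formula~\eqref{delbar} is unnecessary overhead; the ordinary graded Leibniz rule for $\delb$ on $\Omega^{(0,\bullet)}$ already does the job.

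On non-degeneracy the situation is reversed: the paper simply asserts ``non-degeneracy is obvious by the above discussion,'' appealing to $L_{N+3}^*\subset L_{-N-3}$ and faithfulness of $h$. This is exactly your ``easy half'' (pair a nonzero holomorphic $\eta$ with $\eta^*$), and the paper does not address the other direction at all. Your proposed remedies---either an orthogonality argument via Peter--Weyl isotypical components, or an independent dimension count of $H^2$ in the GT basis---are genuine additions. Be aware, however, that the dimension-count route must be kept independent of the subsequent Theorem in the paper, since that Theorem \emph{invokes} this Serre duality proposition to compute $H^2$; computing $\dim H^2$ directly from the GT tableaux (as you suggest, parallel to Theorem~\ref{kerEl}) avoids the circularity, but citing the Theorem itself would not.
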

\begin{proof}
First note that $H^2(\nabla,L_N)$ is a quotient of $L_{N+3}$ and $H^0(\nabla,L_{-N-3})$ is a subspace of $ L_{-N-3}$. We show that this map is well defined. For this, suppose that $\xi$ and $\xi'$ are in the same cohomology class. Hence $h(\xi\eta)-h(\xi'\eta)=h((\xi-\xi')\eta)=h(\delb\alpha\eta)=h(\delb(\alpha\eta)-\alpha\delb \eta)=0$, by noting that $\eta\in \text{Ker}\delb$ and $h$ has invariance property with respect to the map $\delb$. Now non-degeneracy is obvious by the above discussion.\\
\end{proof}
The above result easily can be lifted to the general case of $\qp^\ell_q$ in the following way.
 The pairing
\begin{equation}
\langle \xi,\eta\rangle:=h(\xi\eta),\quad \forall \xi\in L_N,\,\forall \eta\in  L_{-N}.
\end{equation}
is a nondegenerate pairing and hold true passing to the cohomology
\begin{align*}
&\langle\,,\,\rangle:H^l(\nabla, L_N)\times H^0(\nabla,L_{-N-\ell-1})\rightarrow\mathbb {C}\\
&\langle[\xi],[\eta]\rangle:=h(\xi\eta), \quad\forall \xi\in L_{N+\ell+1},\quad\forall\eta\in L_{-N-\ell-1}.
\end{align*}

In the following we will compute the $(0,q)$-Dolbeault cohomology of $\qp^2_q$. The result is analog of the classical case. i.e.
\begin{theorem}
With the above notations
\begin{align*}
H^i(\nabla^{\delb}, L_N)=
\begin{cases}
\mathbb {C}\langle z_1,z_2,z_3\rangle_N\,&\text{if}\quad i=0,\,N\geq 0,\\
0\, &if
\begin{cases}i=0,\,N<0\\i=1,\, N=0\\i=2,\,N>-3\end{cases}\\
\mathbb {C}\langle z_1,z_2,z_3\rangle_{-N-3}^*\,&\text{if}\quad i=2,\,N\leq-3.
\end{cases}
\end{align*}
\end{theorem}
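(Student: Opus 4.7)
The plan naturally breaks into three parts, matching the three cases of the statement.

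The $H^0$ cases are immediate from earlier results. For $N \geq 0$, the corollary to Theorem~\ref{kerEl} (with $\ell = 2$) identifies $H^0(L_N, \nabla_N^{\delb})$ with the kernel of $E_\ell \blacktriangleright (\cdot)$ acting on $L_N$, which has dimension $\binom{N+2}{2}$; Theorem~\ref{thm ring structure} then identifies the total graded space $\bigoplus_{N \geq 0} H^0(L_N, \nabla_N^{\delb})$ with the algebra $\mathbb{C}\langle z_1,z_2,z_3\rangle / \langle z_iz_j - q z_j z_i \rangle$, and restricting to degree $N$ gives exactly $\mathbb{C}\langle z_1,z_2,z_3\rangle_N$. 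For $N < 0$ the same corollary gives $H^0 = 0$ directly.

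The $H^2$ cases follow from the Serre-type duality established just above, specialized to $\ell = 2$. That result provides a nondegenerate pairing $H^2(\nabla^{\delb}, L_N) \times H^0(\nabla^{\delb}, L_{-N-3}) \to \mathbb{C}$, equivalently an isomorphism $H^2(L_N) \simeq H^0(L_{-N-3})^*$. Combining with the $H^0$ computation: if $N > -3$ then $-N-3 < 0$ and the right-hand side vanishes, so $H^2(L_N) = 0$; if $N \leq -3$ then $-N-3 \geq 0$ and $H^0(L_{-N-3}) = \mathbb{C}\langle z_1,z_2,z_3\rangle_{-N-3}$, yielding the claimed dual.

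The main obstacle is the middle-degree vanishing $H^1(\nabla^{\delb}, L_0) = 0$. My approach is to decompose the Dolbeault complex
$$ 0 \to \mathcal{A}(\qp^2_q) \xrightarrow{\delb} \Omega^{(0,1)} \xrightarrow{\delb} L_3 \to 0 $$
via Peter-Weyl as $\mathcal{A}(SU_q(3)) = \bigoplus_{n} V_n \otimes V_n^*$, using the identification $\Omega^{(0,2)} \simeq L_3$ for $\ell = 2$ proved in Section~\ref{cpl}. The defining invariance conditions for each term select specific Gelfand-Tsetlin components inside every $V_n \otimes V_n^*$, and the differential $\delb$, given by right action with products of $F_j$'s as in (\ref{delbar}) (and analogously on $\Omega^{(0,1)}$), respects this grading and restricts to a finite-dimensional subcomplex on each irrep labeled by $n = (n_1, n_2)$. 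Using the explicit formulas (\ref{E}) and (\ref{Ajk}) for the $F_j$-action on the GT basis, one verifies exactness at the middle term within each such subcomplex; equivalently, since $\dim H^0(L_0) = 1$ (only the trivial representation contributes) and $H^2(L_0) = 0$ by the previous paragraph, the Peter-Weyl computation of the Euler characteristic $\chi(L_0) = 1$ then pins down $\dim H^1(L_0) = 0$.
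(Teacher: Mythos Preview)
Your proposal is correct and follows essentially the same route as the paper: the $H^0$ and $H^2$ cases are handled exactly as you describe (via the earlier $\ell=2$ computations and the Serre-type pairing, respectively), and for $H^1(L_0)$ the paper likewise passes to the Peter--Weyl decomposition and carries out the explicit Gelfand--Tsetlin computation of $\delb_1$ and $\delb_2$ on each isotypical component $t(n,n)^{\bullet}_{\underline j}$ to verify $\operatorname{Ker}\delb_2=\operatorname{Im}\delb_1$. Your Euler-characteristic reformulation is a mild repackaging of the same bookkeeping rather than a genuinely different argument, since establishing $\chi_n=0$ on each nontrivial irrep still requires the same GT tableau counts.
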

\begin{proof}
The zeroth-cohomology has been computed in \cite{KM} and the second cohomology comes from the Serre duality. So we just have to prove that the triviality of the first cohomology.
In order to do so, we will calculate the Im $\delb_1$ and the Ker $\delb_2$ and show the equality.
\begin{align*}
\delb_1(t(n,n+N)^0_{\underline j})&=(E_1E_2\blacktriangleright  t(n,n+N)^0_{\underline j},E_2\blacktriangleright  t(n,n+N)^0_{\underline j})\\
&=(t(n,n+N)^{1,0,1/2}_{\underline j},t(n,n+N)^{1,0,-1/2}_{\underline j})
\end{align*}
For Ker $\delb_2$ we will use the $\delb_2(v_+,v_-)=-E_2\blacktriangleright v_+-E_2E_1+2[2]^{-1}E_1E_2\blacktriangleright v_-$
.  Applying $v_+=t(n,n+3)^{1,0,1/2}_{\underline j}$ and $v_-=t(n,n+3)^{1,0,-1/2}_{\underline j}$ we will have
\begin{align*}
&-E_2\blacktriangleright t(n,n+3)^{1,0,1/2}_{\underline j}=-\sqrt{\frac{[n][n+3+2]}{[2][3]}}t(n,n+3)^{1,1,0}_{\underline j}
-\sqrt{\frac{[n+2][n+3]}{[2]}}t(n,n+3)^{\underline 0}_{\underline j},\\
&-E_2E_1\blacktriangleright t(n,n+3)^{1,0,-1/2}_{\underline j}=\\&-E_2\blacktriangleright t(n,n+3)^{1,0,1/2}_{\underline j}=-\sqrt{\frac{[n][n+3+2]}{[2][3]}}t(n,n+3)^{1,1,0}_{\underline j}
-\sqrt{\frac{[n+2][n+3]}{[2]}}t(n,n+3)^{\underline 0}_{\underline j},\\&\text{and}\\\\
&2[2]^{-1}E_1E_2\blacktriangleright t(n,n+3)^{1,0,-1/2}_{\underline j}=\\
&2[2]^{-1}E_1\blacktriangleright(\sqrt{[2]}\sqrt{\frac{[n][n+3+2]}{[2][3]}}t(n,n+3)^{1,1,-1}_{\underline j})=2\sqrt{\frac{[n][n+5]}{[2][3]}}
t(n,n+3)^{1,1,0}_{\underline j}
\end{align*}
Hence
\begin{align*}
\delb_2(t(n,n+3)^{1,0,1/2}_{\underline j},t(n,n+3)^{1,0,-1/2}_{\underline j})=-2\sqrt{\frac{[n+2][n+3]}{[2]}}
t(n,n+3)^{\underline 0}_{\underline j}
\end{align*}
This shows that $H^1=\frac{Ker \, \delb_2}{Im \, \delb_1}=0$ in the case of $N=0$.
\end{proof}
By a similar but lengthier calculation, one can prove that $H^0(\nabla^{\delb},L_N)=0$ for all $N\neq0$.

\subsection*{Acknowledgments}
We are much obliged and thankful to
Francesco D'Andrea for kindly and promptly  answering many questions
about the subject of \cite{DL,DDL} and suggesting improvements on the first draft of the current  paper.


Department of Mathematics, University of Western Ontario, London, Ontario, N6A5B7, Canada.

\textit{Email}: masoud@\,uwo.ca\\\\
Department of Mathematics, University of Western Ontario, London, Ontario, N6A5B7, Canada.

\textit{Email}: amotadel@\,uwo.ca
\end{document}